\documentclass[11pt]{article}
\usepackage[margin=1in]{geometry}
\usepackage{amsmath,amssymb,amsthm,mathtools}
\usepackage{xcolor, comment}
\usepackage[colorlinks=true,linkcolor=blue!55!black,citecolor=green!45!black,urlcolor=blue!55!black]{hyperref}
\usepackage{microtype}
\usepackage{enumitem}

\newtheorem{theorem}{Theorem}[section]
\newtheorem{lemma}[theorem]{Lemma}
\newtheorem{proposition}[theorem]{Proposition}

\title{Analytic discs and compactness of the $\bar\partial$-Neumann operator}

\author{Qianyun Wang, Yuan Yuan, Xu Zhang}
\date{}

\begin{document}
\maketitle

\begin{abstract}
We construct a  bounded pseudoconvex complete Reinhardt domain $\Omega$ with smooth boundary in $\mathbb{C}^3$
such that the \(\bar\partial\)-Neumann operator \(N_1\) is compact
although \(b\Omega\) contains an analytic disc and thus also fails Catlin's Property
\((P)\) and McNeal's Property \((\tilde P)\). 
This example solves an open problem on compactness of the
\(\bar\partial\)-Neumann operator in the negative.
\end{abstract}

\section{Introduction}

Let $\Omega\Subset\mathbb C^n$ be a bounded pseudoconvex domain and let
$1\leq q\leq n$.  The $\bar\partial$-Neumann operator $N_q$ is the inverse,
on the orthogonal complement of its kernel, of the complex Laplacian
$   \square_q=\bar\partial\bar\partial^*   +\bar\partial^*\bar\partial$
acting on $(0,q)$-forms.  The foundational $L^2$ theory yields existence and
boundedness of $N_q$ on bounded pseudoconvex domains. The reader may refer to \cite{FK72,CS01,S10} for
the basic analytic
framework and its subsequent developments.  Because the boundary conditions for $\square_q$ are
not coercive, boundary geometry enters the estimates in an essential way.
On strongly pseudoconvex domains one has a subelliptic estimate, hence a
positive gain of derivatives.  More generally, finite type geometry leads
to subellipticity. 
Subellipticity implies compactness by the Rellich lemma and also yields global
Sobolev regularity.  Global regularity, however, is a weaker issue: it asks
that $N_q$ preserve Sobolev spaces, without requiring a positive gain of
derivatives or discreteness of the spectrum.  Catlin's weighted methods
provided important sufficient conditions for global regularity
\cite{C83}, and the principal approaches and their relations are surveyed in
\cite{BS99,S06}.
The distinction between existence and regularity became especially vivid
with the Diederich--Forn\ae ss worm domain.  Barrett proved that on worm domains the
Bergman projection fails to preserve certain Sobolev spaces \cite{B92}, and
Christ subsequently established global $C^\infty$ irregularity of the
$\bar\partial$-Neumann problem \cite{C96}.  Thus smooth bounded
pseudoconvexity alone does not ensure global regularity.  These examples also
show why estimates weaker than subelliptic estimates, but strong enough to
control the low-energy spectrum, became a central object of study.

The compactness method has its origin in Kohn and Nirenberg's treatment of
non-coercive boundary value problems \cite{KN65}.  In the present setting its
operator-theoretic form is the compactness estimate
\begin{equation}\label{eq:intro-compactness-estimate}
  \|u\|^2
  \leq \varepsilon\bigl(\|\bar\partial u\|^2
                    +\|\bar\partial^*u\|^2\bigr)
       +C_\varepsilon\|u\|_{-1}^2,
  \quad
  u\in\operatorname{Dom}(\bar\partial)
       \cap\operatorname{Dom}(\bar\partial^*),
\end{equation}
for every $\varepsilon>0$.  Equivalently, the inclusion of the form in $\operatorname{Dom}(\bar\partial)
       \cap\operatorname{Dom}(\bar\partial^*)$,
equipped with its graph norm, into $L^2_{(0,q)}(\Omega)$ is compact; equivalently,
$N_q$ is compact, or $\square_q$ has compact resolvent.  In particular,
compactness is a spectral condition rather than merely a regularity
condition.  On a smoothly bounded pseudoconvex domain the compactness method
also yields exact Sobolev regularity, but the converse need not hold (cf. \cite{BS99,FS01,S06,S10}).

Compactness of the $\bar\partial$-Neumann operator has attracted considerable attention, bringing together ideas from complex analytic geometry, functional analysis, partial differential equations, potential theory and spectral theory. A central objective is to characterize compactness in terms of the geometry and potential-theoretic properties of the boundary. Of particular interest is how degeneracies of the Levi form influence compactness, revealing connections between boundary geometry and the analytic behavior of the $\bar\partial$-Neumann problem.


The first broadly applicable potential-theoretic sufficient condition for
compactness is Catlin's Property $(P)$ \cite{C83}.  
A compact set $K\subset\mathbb C^n$ satisfies $(P)$ if, for every $M>0$, there are a
neighborhood $U_M$ of $K$ and a function
$\lambda_M\in C^2(U_M)$ such that $0\leq\lambda_M\leq1$ and the smallest eigenvalue of $\left(
\partial\bar\partial \lambda_M \right)_{n \times n}$ is at least $M$ on $U_M$. 
For a bounded pseudoconvex domain, Property $(P)$ of $b\Omega$ implies
compactness of $N_q$ for all $q \geq 1$ \cite{C83}.  McNeal introduced  Property
$(\widetilde P)$, replacing the uniform boundedness by a
self-bounded-gradient condition, and showed that 
Property $(\widetilde P)$ also implies 
compactness of $N_q$ \cite{M02}. 

Analytic varieties in the boundary provide the most visible geometric
obstruction.  If $b\Omega$ contains an analytic disc, 
 then it cannot satisfy $(P)$ 
\cite{FS01,S06}.  The converse direction, from absence of  
analytic discs to compactness, is much subtler.  On bounded convex
domains, Fu and Straube proved the sharp equivalence of compactness of $N_q$ with the absence of $q$-dimensional complex analytic varieties. 
In this setting the condition is also equivalent to Property $(P_q)$
\cite{FS98,FS01}.  
 For domains with bounded intrinsic geometry, Zimmer
characterized compactness by the absence of $q$-dimensional boundary
varieties \cite{Z21}.  On the other hand, Matheos constructed a smooth
bounded pseudoconvex complete Hartogs domain in $\mathbb C^2$ whose boundary
contains no analytic disc although $N_1$ is not compact \cite{M98}.  Thus the
absence of analytic discs is not sufficient in general.

Hartogs domains have played an important role in clarifying these relations.
Rotational symmetry converts the $\bar\partial$-Neumann problem into a
sequence of weighted problems, or equivalently into semiclassical electric
and magnetic Schr\"odinger operators \cite{FS02}.  Christ and Fu used this
correspondence and the Aharonov--Bohm effect to prove that, for smoothly
bounded complete Hartogs domains in $\mathbb C^2$, compactness is equivalent
to Property $(P)$ \cite{CF05}. 

The opposite implication---whether compactness rules out analytic
discs---was known in $\mathbb C^2$ (cf. Proposition 9  in \cite{FS01}).
The statement of the problem  in higher dimensions dates back at least to \cite{FS01} by Fu–Straube in 2001.
\c{S}ahuto\u{g}lu and Straube later state that,  {\it whether an analytic disc necessarily an obstruction to compactness is open} in \cite{SS06}. 
 In arbitrary
dimension, \c{S}ahuto\u{g}lu and Straube proved that a boundary complex
manifold obstructs compactness when the Levi form is nondegenerate in the
transverse complex directions, and, in particular, an analytic disc is an
obstruction if the Levi form has only one zero eigenvalue at some point of
the disc \cite{SS06}.  In $\mathbb C^3$, Dall'Ara obtained noncompactness
under the weaker hypothesis that a point of the boundary manifold has finite
regular D'Angelo $2$-type \cite{D18}. 

The purpose of this paper is to show that 
an analytic disc need not obstruct
compactness.  We construct a smoothly bounded pseudoconvex complete
Reinhardt domain $\Omega\subset\mathbb C^3$ for which $N_1$ is compact even
though $b\Omega$ contains a nonconstant analytic disc.  Since the disc forces
the failure of both $(P)$ and $(\widetilde P)$, the same example shows
that neither potential-theoretic property is necessary for compactness.
Together with Matheos's example \cite{M98},  in the general class
of smooth bounded pseudoconvex domains, the absence of non-constant
boundary analytic discs is neither necessary nor sufficient for
compactness of $N_1$.

Our construction belongs to the Hartogs--Fourier tradition but has a
different mechanism from the worm domain irregularity of
\cite{B92,C96}.  An infinitely flat layered plurisubharmonic potential is
designed so that its active layer moves to higher and higher scale near the
analytic disc.  A Hessian estimate and a radial integration by
parts argument force the relevant weighted Fourier spectral gaps to diverge.
This is the compact, rather than resonant, side of the
Schr\"odinger correspondence in \cite{FS02,CF05}.  We then prove compactness
of the boundary complex Green operator by a Fourier decomposition and use
the theorem of Raich and Straube that compactness of the boundary Green operator implies compactness of the interior $\bar\partial$-Neumann
operator \cite{RS08}.  This yields the following
main theorem.

\begin{theorem}
\label{thm:main}
There exists a bounded pseudoconvex complete Reinhardt domain $\Omega$ 
 in $\mathbb{C}^3$ with smooth boundary such that 
 $\Omega$ satisfies
\begin{enumerate}[label=\textup{(\roman*)}]
\item the \(\bar\partial\)-Neumann operator \(N_1\) on \(\Omega\) is compact;
\item there exists a nonconstant analytic disc in \(b\Omega\);
\item \(b\Omega\) fails Catlin's Property \((P_1)\) and McNeal's Property \((\tilde P_1)\).
\end{enumerate}
\end{theorem}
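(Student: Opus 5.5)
The plan is to build $\Omega$ explicitly as a complete Reinhardt domain
\[
\Omega=\{(z_1,z_2,z_3)\in\mathbb C^3:\ \rho(z)<0\},\qquad \rho(z)=\phi(|z_1|^2,|z_2|^2)+|z_3|^2-1,
\]
where $\phi$ is a suitable profile. Near the circle where the disc will sit, $\phi$ will be replaced by an infinitely flat layered plurisubharmonic potential $\psi(\log|z_1|,\log|z_2|)$. This potential is a sum of layers $\sum_k \chi_k$. The layer $\chi_k$ is active at scale $2^{-k}$ in the distance to the disc and has Hessian of size $M_k\to\infty$ there. The layers are chosen so that all derivatives of $\sum_k\chi_k$ vanish at the disc, which gives smoothness.

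Properties (ii) and (iii) are the routine part. I would arrange that $\rho$ is independent of $z_3$ along the set $\{|z_1|=a,\ |z_2|=b\}$ together with a flat direction, so that a disc such as $\zeta\mapsto(a,b,\zeta)$, or a disc in the $z_1$-variable with $|z_2|,|z_3|$ fixed, lies in $b\Omega$. Pseudoconvexity is checked through logarithmic convexity of the Reinhardt profile. Then (iii) follows from (ii) by the cited fact \cite{FS01,S06} that an analytic disc in the boundary destroys $(P_1)$. Since $(\tilde P_1)$ implies the same maximum-principle obstruction along the disc, it fails as well.

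For (i), I would use Raich--Straube \cite{RS08}: it suffices to prove that the complex Green operator $G_1$ on $b\Omega$ is compact. Equivalently, I would prove a compactness estimate $\|u\|^2\le\varepsilon Q_b(u,u)+C_\varepsilon\|u\|_{-1}^2$ for $(0,1)$-forms on $b\Omega$.

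By Reinhardt symmetry, $b\Omega$ carries a $\mathbb T^3$-action, and $L^2_{(0,1)}(b\Omega)$ splits orthogonally into Fourier modes $\alpha\in\mathbb Z^3$. On each mode, $\square_b$ reduces to a weighted operator on the base in logarithmic coordinates. It is unitarily a semiclassical Schr\"odinger-type operator whose potential involves $|\alpha|^2$ times Hessian terms of $\psi$, plus the Levi eigenvalues.

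Compactness then reduces to two statements:
\begin{enumerate}[label=(\alph*)]
\item each mode operator has compact resolvent;
\item its lowest eigenvalue $\lambda_1(\alpha)\to\infty$ as $|\alpha|\to\infty$.
\end{enumerate}
Statement (a) should follow from local ellipticity away from the disc together with the growth of the layers. Given (a) and (b), a diagonal argument yields the compactness estimate.

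The key estimate is (b), and I expect it to be the main obstacle. The difficulty is that the Levi form genuinely degenerates on the disc, so there is no pointwise lower bound. Instead I would use a radial integration-by-parts (Hardy/Agmon-type) identity in the variable $t=$ distance to the disc. For frequency $|\alpha|\sim 2^{k}$, the uncertainty principle prevents a mode from concentrating closer than about $2^{-k}$ to the disc. At exactly that scale, the active layer $\chi_k$ supplies Hessian of order $M_k$, giving $\lambda_1(\alpha)\gtrsim \min(M_k,\ \cdot)\to\infty$.

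The mixed components, in the disc direction and the $z_3$-direction, must also be controlled. There I would use the $\bar\partial_b$ part of $Q_b$ in the disc direction, where the Fourier variable itself gives the gain $|\alpha_j|^2$. The delicate bookkeeping is choosing $M_k$ and the layer widths so that three requirements hold simultaneously: the Hessian estimate holds with the cross terms absorbed, the infinite flatness (smoothness) survives, and the low modes with $\alpha_3=0$ along the disc are handled by compactness within each fixed mode rather than by the spectral gap.
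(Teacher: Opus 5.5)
Your overall architecture is the paper's: a Hartogs--Reinhardt domain $\{|u|^2+\Psi(z,w)<1\}$ over a base carrying an infinitely flat layered plurisubharmonic potential, Raich--Straube to reduce to $G_1$, a Fourier decomposition under the fibre rotation, fixed modes handled by (sub)ellipticity, and large modes handled by a radial integration by parts in $t=|w|^2$. (The paper uses only the circle action in $u$. A fixed $u$-mode is even elliptic near the disc, because there it becomes a weighted $\bar\partial\oplus\bar\partial^*$ problem for forms compactly supported in the interior of the base.)

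The genuine gap is step (b) for the coefficient of the form along the disc direction. You propose to control it by ``the Fourier variable itself'', but there is no such gain. The disc-direction angular frequency can stay $0$, e.g.\ for a coefficient $\chi(|z|^2)h(|w|^2)$. Then $\partial/\partial\bar z$ contributes only an $O(1)$ amount, and the fibre frequency $n$ enters the disc-direction field $\bar L_1=\partial_{\bar z}+n\varphi_{\bar z}$ only through $\varphi_{\bar z}$, which vanishes identically on the disc. In fact, if the potential near the disc were independent of $z$, the base test forms $f=\chi(|z|^2)h(|w|^2)\,d\bar z$ (with $h\equiv1$ near $0$) would keep $Q_{n,\varphi}(f)/\|f\|_{2n\varphi}^2$ bounded as $n\to\infty$, and $G_1$ would not be compact.

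The missing idea is that the layers must create Levi form \emph{in the disc direction} that beats the radial derivative. The paper proves $(\partial\bar\partial\varphi)\ge c\,J(t,z)\,t\varphi_t\,I_2$ with $J(t,z)\to\infty$ as $t\to0$. The $z\bar z$-entry comes from the coupling $x_j=|w|^2e^{j|z|^2}/\tau_j$, which gives $\Phi_{z\bar z}\ge\sum_j jA_jd(x_j)\ge c\,J\,t\Phi_t$, because the layers $A_jd(x_j)$ concentrate at indices $j\approx J$. This is paired with an $n$-uniform weighted Hardy inequality obtained from $\int_0^T\frac{d}{dt}\bigl(t^{k+1}|h|^2e^{-2n\varphi}\bigr)\,dt=0$. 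It reads $\frac{cM}{2}\|f\|_{2n\varphi}^2\le\|\partial f/\partial\bar w\|_{2n\varphi}^2+2ncM\int_B(t\varphi_t)|f|^2e^{-2n\varphi}\,dV$ for $f$ supported in $\{t<2\eta_M\}$, and it is applied to both coefficients.

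Two further points also need repair. First, the quantity that must diverge is the ratio $\partial\bar\partial\varphi/(t\varphi_t)$, not an absolute ``Hessian of size $M_k$''; the latter is incompatible with all derivatives vanishing on the disc. Likewise, the scale at which a mode of frequency $n$ lives is set by the weight, roughly where $n\,t\varphi_t\sim1$, not by $|w|\sim1/n$. The integration-by-parts identity above is what replaces your uncertainty-principle heuristic.

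Second, the disc must lie in a coordinate hyperplane. The disc $\zeta\mapsto(a,b,\zeta)$ is not in $b\Omega$ for $\rho=\phi+|z_3|^2-1$. A $z_1$-disc with $|z_2|=b>0$ and $|z_3|>0$ fixed contains points with all coordinates nonzero. Near such points, a branch of $(\log z_1,\log z_2,\log z_3)$ maps the pseudoconvex Reinhardt domain locally onto a convex domain. Fu--Straube's theorem for locally convexifiable domains, together with the locality of compactness, would then make $N_1$ noncompact. The paper instead uses $\zeta\mapsto(\zeta,0,u_0)$ with $|u_0|=1$ and $|\zeta|<R_0$, which lies inside $\{w=0\}$, where $\Phi$ and $\vartheta$ vanish.
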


\section{The construction of the domain}
Set
$$ d(x)=\begin{cases}e^{-\frac1x},&x>0,\\0,&x=0,\end{cases}$$
and $$ G(x)=\int_0^x\frac{d(s)}s\,ds.$$
It is easy to verify that  \(G\in C^\infty([0,\infty))\) and every derivative of \(G\) vanishes at
zero. 
 Fixing \(\tau_2>0\), for \(j\ge2\),
set $ A_j=e^{-j^2}, \tau_{j+1}=\frac{\tau_j}{2j}$ and then
 define
\begin{equation}\label{eq:Phi}
 \Phi(z,w)=\sum_{j=2}^{\infty}A_jG(x_j)
\end{equation}
on \(\mathbb C^2\) with $x_j=\frac{|w|^2e^{j|z|^2}}{\tau_j}$.

\begin{lemma}
\label{lem:smooth}
The series \eqref{eq:Phi} converges in \(C^\infty\) on compact subsets of
\(\mathbb C^2\) to  the nonnegative 
plurisubharmonic function $\Phi$. Moreover,  $\Phi$ is positive when \(w\ne0\) and every derivative of
\(\Phi\) vanishes on \(\{w=0\}\).
\end{lemma}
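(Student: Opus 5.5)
Three things need to be checked: convergence of the series in $C^\infty$ on compact sets, plurisubharmonicity (and nonnegativity) of the terms, and positivity together with flatness along $\{w=0\}$.

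\textbf{Convergence.} Fix a compact $K\subset\{|z|\le R,\ |w|\le R\}$. On $K$ we have
\[
0\le x_j\le R^2e^{jR^2}/\tau_j .
\]
Since $\tau_{j+1}=\tau_j/(2j)$, we get $\tau_j\ge \tau_2\,C^{-j}(j!)^{-1}$, so $x_j\le e^{O(j\log j)}$. The function $G$ grows at most linearly, because $d\le1$ gives $G(x)\le\log^+x+C$. Every derivative $G^{(k)}$ is bounded on $[0,\infty)$ for $k\ge1$: $G'(x)=e^{-1/x}/x$ and its derivatives decay like powers of $x$ at infinity. Differentiating $G(x_j)$ by the chain rule, a derivative of order $m$ in $(z,\bar z,w,\bar w)$ is a finite sum of terms $G^{(k)}(x_j)\cdot\prod \partial^{\alpha}x_j$. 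Each $\partial^\alpha x_j$ is bounded by $C_m j^{m}e^{jR^2}/\tau_j$, which is $e^{O(j\log j)}$ on $K$. Multiplying by $A_j=e^{-j^2}$ kills this superexponential-in-$j\log j$ growth, so $\sum_j A_j\|G(x_j)\|_{C^m(K)}<\infty$ for each $m$. Hence the series converges in $C^\infty$ on compact sets, and $\Phi$ is smooth and nonnegative since $G\ge0$.

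\textbf{Plurisubharmonicity.} Each term is plurisubharmonic. Write $G(x)=g(\log x)$ with $g(t)=G(e^t)$. Then $g'(t)=e^tG'(e^t)=d(e^t)\ge0$, and $g''(t)=e^td'(e^t)\ge0$ because $d$ is increasing. So $g$ is convex and nondecreasing. Moreover
\[
\log x_j=\log|w|^2+j|z|^2-\log\tau_j
\]
is plurisubharmonic on $\{w\ne0\}$. Hence $G(x_j)=g(\log x_j)$ is plurisubharmonic there. Near $\{w=0\}$ the function $G(x_j)$ is smooth, so its Levi form is continuous; the Levi form is $\ge0$ off $\{w=0\}$, which is dense, so it is $\ge0$ everywhere. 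A locally uniform ($C^2$) limit of plurisubharmonic functions with nonnegative coefficients $A_j$ is plurisubharmonic.

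\textbf{Positivity and flatness.} If $w\ne0$ then $x_2>0$, so $G(x_2)>0$, since the integrand $d(s)/s$ is positive for $s>0$. Therefore $\Phi>0$.

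For flatness, work on a compact $K$ and use the $C^\infty$ convergence already established: it suffices to show that each term has all derivatives vanishing on $\{w=0\}$. Near $w=0$, $G(x_j)=G\circ h$ with $h=|w|^2e^{j|z|^2}/\tau_j$, where $h$ is smooth and $h=0$ on $\{w=0\}$. By Fa\`a di Bruno, every derivative of $G\circ h$ is a sum of terms $G^{(k)}(h)\cdot(\text{derivatives of }h)$ with $k\ge1$ when the order is at least one. Every $G^{(k)}$ vanishes at $0$, so each such term is $0$ on $\{w=0\}$; the order-zero value is $G(0)=0$ as well. Summing the uniformly convergent differentiated series gives that all derivatives of $\Phi$ vanish on $\{w=0\}$.

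The main obstacle is the bookkeeping in the convergence step: checking that $1/\tau_j$, which is roughly $(2^j j!)$ up to constants, times $e^{jR^2}$ times polynomial factors in $j$ is dominated by $e^{-j^2}$. This holds because $\log(j!)=O(j\log j)=o(j^2)$.
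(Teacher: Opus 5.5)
Your proof is correct and follows essentially the paper's route: bounded derivatives $G^{(q)}$ together with the chain rule, and the fact that $A_j=e^{-j^2}$ beats the $e^{O(j\log j)}$ growth, give $C^\infty$ convergence by the $M$-test, while flatness comes from $G^{(q)}(0)=0$. Your plurisubharmonicity argument, writing $G(x_j)=g(\log x_j)$ with $g$ convex and nondecreasing, is the abstract form of the paper's explicit Hessian identity \eqref{eq:compact-layer-Hessian}, and your continuity/density step across $\{w=0\}$ matches the paper's closing remark.
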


\begin{proof}
It follows from the definition that $ \Phi(z,w) \geq 0$ and $ \Phi(z,w)>0$ whenever $w\ne0.$
%
%
%
%
For \(x\ge1\), we have
\[
 \begin{aligned}
 G(x)-\log x
 &=
 G(1)+\int_1^x\frac{e^{-\frac1s}-1}{s}\,ds.
 \end{aligned}
\]
Noting $ 0\le 1-e^{-\frac1s}\le \frac1s$ for $s\ge1$,
we obtain
$ G(x)=\log x+O(1)$
 as $x\to\infty$. 
 This implies that $ 0\le G(x)\le C\bigl(1+\log(1+x)\bigr)$ for any $x\ge0.$ 
Differentiating \(G'(x)=\frac{e^{-\frac1x}}{x}\) also yields that, for every
\(q\ge1\), $ |G^{(q)}(x)|\le C_qx^{-q}$, for $ x\ge1$.
Since $G$ is smooth 
 on \(0<x\le1\), we have
$ \sup_{x>0}\bigl|x^qG^{(q)}(x)\bigr|<\infty$ for all $ q\ge1.$ 
In particular, every derivative \(G^{(q)}\) is bounded on
\([0,\infty)\) for \(q\ge1\). 
Fix a compact bidisc $P_{R,T}:=\{(z,w)\in\mathbb C^2:|z|\le R,\ |w|\le T\}.$
%
On \(P_{R,T}\), $ 0\le x_j(z,w)\le T^2  \tau^{-1}_j e^{jR^2}.$
Let \(\alpha\) be a real multi-index of order
\(|\alpha|=m\ge1\). 
It follows that $ \sup_{P_{R,T}} \left|\partial^\alpha x_j\right| \le C_{\alpha,R,T}\,j^m  \tau^{-1}_j e^{jR^2}$ for some constant $C_{\alpha,R,T} >0$.
%
%
For $m\ge1$, by the chain rule, \(\partial^\alpha(G\circ x_j)\) is a finite
sum of terms of the form
$ G^{(q)}(x_j) \prod_{\nu=1}^{q}\partial^{\beta_\nu}x_j$,
where $ 1\le q\le m,  |\beta_\nu|\ge1,  \sum_{\nu=1}^{q}|\beta_\nu|=m.$
Because the derivatives \(G^{(q)}\) are bounded, it follows from the estimates above that 
$$ \sup_{P_{R,T}}  \left|\partial^\alpha(G(x_j))\right|  \le C_{m,R,T}\,  j^m\bigl(1+  \tau^{-1}_j e^{jR^2} \bigr)^m.$$
For \(m=0\), it follows that $$ \sup_{P_{R,T}}G(x_j) \le C_{R,T}\bigl(1+\log(1+ \tau^{-1}_j e^{jR^2})\bigr).$$
Consequently, for every multi-index \(\alpha\) of order \(m\ge0\),
$$ \sup_{P_{R,T}} \left|A_j \partial^\alpha G(x_j(z,w)) \right| \le C_{m,R,T}e^{-j^2}(1+j\log j)j^m \left(1+\tau^{-1}_j  e^{jR^2}\right)^m \leq e^{-\frac{j^2}{2}}$$
for $j$ sufficiently large. 
%
It then follows from the
 Weierstrass \(M\)-test  that, on every compact
subset of \(\mathbb C^2\), the series (\ref{eq:Phi}) defining \(\Phi\), together with
all of its derivatives, converges uniformly. Consequently, $ \Phi\in C^\infty(\mathbb C^2).$

For every \(z\in\mathbb C\), $ x_j(z,0)=0.$
Since every derivative of \(G\) vanishes at zero, the chain rule yields
$ \partial^\alpha G(x_j(z,0))=0$
for every multi-index \(\alpha\) and every \(j\ge2\). Because the
series of derivatives converges uniformly on compact sets, we may
differentiate term by term and obtain, for all $\alpha$, 
\[ \partial^\alpha\Phi(z,0) = \sum_{j=2}^{\infty}A_j \partial^\alpha G(x_j(z,0)) =0. \]

It follows from the straightforward calculation that,  on the set \(\{w\ne0\}\),
\begin{equation}\label{eq:compact-layer-Hessian}
\sqrt{-1} \partial\bar\partial G(x_j)
 =\sqrt{-1} j d(x_j)\,dz\wedge d\overline z
 +\sqrt{-1}\frac{d(x_j)}{x_j}
 \left(j\overline z\,dz+\frac{dw}{w}\right)
 \wedge
 \overline{\left(j\overline z\,dz+\frac{dw}{w}\right)} \geq 0.
\end{equation}
 Thus, by the term by term differentiation and 
 the continuity  of (\ref{eq:Phi}), $\Phi$ is a smooth plurisubharmonic function on $\mathbb{C}^2$.
%
%
%
\end{proof}

Let \(\vartheta\) be
a smooth convex increasing function
such that $\vartheta$ is zero on \((-\infty,0]\), has positive derivative on
\((0,\infty)\), and tends to infinity. Choosing \(R_0>0\), \(C>0\), set
\begin{equation*}
 \Psi(z,w)=\Phi(z,w)+C\vartheta(|z|^2+|w|^2-R_0^2).
\end{equation*}
Let 
\begin{equation*}
 \Omega=\{(z,w,u) \in \mathbb{C}^3 : |u|^2+\Psi(z,w)<1\}.
\end{equation*}

\begin{lemma}\label{bounded}
$\Omega$ is a bounded complete Reinhardt  domain in $\mathbb{C}^3$.
\end{lemma}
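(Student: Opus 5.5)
The plan is to check the three defining properties (open, complete Reinhardt, bounded) separately, using only nonnegativity and continuity of $\Phi$ from Lemma~\ref{lem:smooth} and the properties of $\vartheta$.

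\emph{Openness and nonemptiness.} By Lemma~\ref{lem:smooth}, $\Phi$ is smooth, and $\vartheta$ is smooth, so $\rho(z,w,u)=|u|^2+\Psi(z,w)-1$ is continuous on $\mathbb C^3$. Hence $\Omega=\{\rho<0\}$ is open. Since $\Phi(z,0)=0$ and $\vartheta(|z|^2-R_0^2)=0$ for $|z|\le R_0$, we get $\rho(0)=-1$, so $0\in\Omega$.

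\emph{Complete Reinhardt property.} First, $\Phi$ depends only on $|z|$ and $|w|$, because each $x_j=|w|^2e^{j|z|^2}/\tau_j$ does. The $\vartheta$ term depends only on $|z|^2+|w|^2$. Hence $\rho$ depends only on $(|z|,|w|,|u|)$, and $\Omega$ is Reinhardt. For completeness, we show that $\rho$ is nondecreasing in each of $|z|,|w|,|u|$.
\begin{itemize}
\item Since $G'(x)=d(x)/x\ge0$, $G$ is nondecreasing, and each $x_j$ is nondecreasing in $|z|$ and in $|w|$. So every term $A_jG(x_j)$ is nondecreasing in $|z|$ and $|w|$, and therefore so is $\Phi$.
\item $\vartheta$ is nondecreasing, so $\vartheta(|z|^2+|w|^2-R_0^2)$ is nondecreasing in $|z|$ and $|w|$.
\item $|u|^2$ is increasing in $|u|$.
\end{itemize}
Consequently, if $(z,w,u)\in\Omega$ and $|\lambda_i|\le1$, then $\rho(\lambda_1z,\lambda_2w,\lambda_3u)\le\rho(z,w,u)<0$. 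So $\Omega$ is a complete Reinhardt domain. It is also connected, since every point is joined to $0$ by the segment $t\mapsto t\cdot p$, which stays in $\Omega$ by completeness.

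\emph{Boundedness.} On $\Omega$ we have $\Phi\ge0$ and $\vartheta\ge0$. This gives $|u|^2<1$ and $C\vartheta(|z|^2+|w|^2-R_0^2)<1$. Since $\vartheta$ is convex, increasing, and tends to infinity, there is $s_0$ with $\vartheta(s)\ge 1/C$ for all $s\ge s_0$. Therefore $|z|^2+|w|^2<R_0^2+s_0$ on $\Omega$. Together with $|u|<1$, this shows $\Omega$ is bounded.

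\emph{Main obstacle.} The only point needing care is boundedness. Here the role of $\vartheta$ is essential, since $\Phi$ alone does not confine $z$ when $w=0$. The argument above only needs $\vartheta\to\infty$ and monotonicity, both of which are part of the hypotheses.
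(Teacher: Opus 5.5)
Your proof is correct and follows essentially the same route as the paper: boundedness comes from $\Phi\ge 0$ together with $\vartheta\to\infty$, and everything else comes from monotonicity of $G$, of the $x_j$, and of $\vartheta$. The only difference is organizational. You prove monotonicity in each modulus separately and get connectedness directly from completeness via segments to the origin. The paper instead proves star-shapedness of the base sublevel sets $B_c=\{\Psi<c\}$ under the joint scaling $(\epsilon z,\epsilon w)$ and uses that the $u$-fibers are discs. Your coordinatewise argument also supplies the completeness check that the paper dismisses as ``follows from the definition.''
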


\begin{proof}
Since \(\vartheta(s)\to\infty\) as $s \to \infty$ and $\Phi \geq 0$, then  \(\Omega\) is bounded. For  $c >0$, let  \(B_c=\{(z,w)\in\mathbb C^2:\Psi(z,w)<c\}\).  \(\Psi(0,0)=0<c\), so the origin belongs to \(B_c\). 
Fix \((z,w)\in B_c\) and \(0\le \epsilon \le 1\). For every \(j\ge2\),
\[
 x_j(\epsilon z, \epsilon w)
 =\frac{ \epsilon^2|w|^2e^{j \epsilon^2|z|^2}}{\tau_j}
 \le
 \frac{|w|^2e^{j|z|^2}}{\tau_j}
 =x_j(z,w).
\]
 Since \(G\) is increasing and
\(A_j>0\), it follows that
\[
 \Phi(\epsilon z, \epsilon w)
 =\sum_{j\ge2}A_jG\bigl(x_j(\epsilon z, \epsilon w)\bigr)
 \le
 \sum_{j\ge2}A_jG\bigl(x_j(z,w)\bigr)
 =\Phi(z,w).
\]
Moreover,
since \(\vartheta\) is increasing,
\[
 \vartheta\bigl(|\epsilon z|^2+|\epsilon w|^2-R_0^2\bigr)
 \le
 \vartheta\bigl(|z|^2+|w|^2-R_0^2\bigr).
\]
Consequently,
$ \Psi(\epsilon z, \epsilon w) \le \Psi(z,w)<c,$
and thus \((\epsilon z, \epsilon w)\in B_c\).
This implies  that $B_c$ is star-shaped with center $(0, 0)$ and thus $B_c$ is connected.
Since each \(u\)-fiber is a  disc for any $(z, w) \in B_c$,
\(\Omega\) is connected.
It also follows from the definition that $\Omega$ is complete Reinhardt.
\end{proof}

\begin{proposition}
$\Omega$ is a bounded pseudoconvex complete Reinhardt  domain with smooth boundary. 
\end{proposition}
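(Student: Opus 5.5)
Boundedness, connectedness and the complete Reinhardt property are already given by Lemma~\ref{bounded}, so two things remain: pseudoconvexity and smoothness of $b\Omega$. Set $\rho(z,w,u)=|u|^2+\Psi(z,w)-1$, so that $\Omega=\{\rho<0\}$. By Lemma~\ref{lem:smooth} and the smoothness of $\vartheta$, the function $\rho$ is smooth on $\mathbb C^3$.

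\emph{Pseudoconvexity.} I will show that $\rho$ is plurisubharmonic. By Lemma~\ref{lem:smooth}, $\Phi$ is plurisubharmonic. The function $s\mapsto\vartheta(s-R_0^2)$ is convex and increasing, and $|z|^2+|w|^2$ is plurisubharmonic, so $\vartheta(|z|^2+|w|^2-R_0^2)$ is plurisubharmonic. Finally, $|u|^2$ is plurisubharmonic. Hence $\rho$ is a smooth plurisubharmonic function, $\Omega$ is a sublevel set of it, and $-\log(-\rho)$ is a plurisubharmonic exhaustion of $\Omega$. Therefore $\Omega$ is pseudoconvex. Alternatively, once $d\rho\neq0$ on $b\Omega$ is known, the Levi form of $\rho$ is positive semidefinite on all vectors, in particular on complex tangent vectors.

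\emph{Smooth boundary.} This is the main step: I must show $d\rho\ne0$ on $b\Omega=\{\rho=0\}$. Take $p=(z,w,u)$ with $\rho(p)=0$.
\begin{itemize}
\item If $u\neq0$, then $\partial\rho/\partial\bar u=u\neq0$.
\item If $u=0$, then $\Psi(z,w)=1$, and I use the radial derivative. By the proof of Lemma~\ref{bounded}, $\Psi$ is nondecreasing along the rays $\epsilon\mapsto(\epsilon z,\epsilon w)$. So it suffices to show that $\frac{d}{d\epsilon}\Psi(\epsilon z,\epsilon w)\big|_{\epsilon=1}>0$, since this quantity equals $2\,\mathrm{Re}\bigl(z\Psi_z+w\Psi_w\bigr)$ and its positivity forces $d\Psi\ne0$.
\end{itemize}
For the radial derivative, differentiate term by term, which is legitimate by Lemma~\ref{lem:smooth}:
\[
\frac{d}{d\epsilon}x_j(\epsilon z,\epsilon w)\Big|_{\epsilon=1}=x_j\bigl(2+2j|z|^2\bigr),
\qquad G'(x_j)=\frac{d(x_j)}{x_j},
\]
so the $\Phi$-part of the derivative is
\[
\sum_j A_j\, d(x_j)\bigl(2+2j|z|^2\bigr)\ \ge 0,
\]
with strict inequality when $w\neq0$. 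The $\vartheta$-part is $2C\vartheta'(|z|^2+|w|^2-R_0^2)(|z|^2+|w|^2)\ge0$, with strict inequality when $|z|^2+|w|^2>R_0^2$.

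Now suppose $\Psi(z,w)=1$. If $w\ne0$, the $\Phi$-part is already positive. If $w=0$, then $\Phi(z,0)=0$, so $C\vartheta(|z|^2-R_0^2)=1$. This forces $|z|^2>R_0^2$, because $\vartheta$ vanishes on $(-\infty,0]$, and then the $\vartheta$-part is positive since $\vartheta'>0$ on $(0,\infty)$.

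In both cases the radial derivative is positive, so $d\rho\neq0$ on $b\Omega$, and $b\Omega$ is a smooth real hypersurface by the implicit function theorem. The only delicate point is the locus $\{w=0\}$, where $\Phi$ is infinitely flat and all the gradient must come from the $\vartheta$ term. This case analysis handles it for any choice of $R_0>0$ and $C>0$.
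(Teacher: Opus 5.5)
Your proof is correct and follows the paper's argument. You use the same defining function $\rho=|u|^2+\Psi-1$, the same three-case check ($u\neq0$; $u=0,\ w\neq0$; $u=w=0$ forcing $|z|>R_0$), and plurisubharmonicity of $\rho$ for pseudoconvexity, while Lemma~\ref{bounded} supplies the bounded, connected and complete Reinhardt parts. The only cosmetic difference is that you treat both $u=0$ subcases with the single Euler derivative $2\,\mathrm{Re}(z\Psi_z+w\Psi_w)$, where the paper differentiates in $|w|^2$ and in $|z|^2$ separately. Your version has the small merit of exhibiting an explicitly positive derivative, rather than appealing to ``strictly increasing,'' which on its own would not rule out a vanishing derivative.
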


\begin{proof}
We claim that $\rho(z, w, u)=|u|^2+\Psi(z,w)-1$ is the defining function of $\Omega$. 
If \(u\ne0\), $\rho$ has nonzero gradient.  If \(u=0,w\ne0\), then $\rho$ has nonzero gradient since 
 \(\Psi \) is a strictly increasing function in $|w|^2$. 
If \(u=w=0\) and
\(\Psi=1\), then \(C\vartheta(|z|^2-R_0^2)=1\) and thus \(|z|>R_0\).
It follows that $\rho$ has nonzero gradient since 
 \(\Psi \) is a strictly increasing function in $|z|^2$ for  \(|z|>R_0\).
 Since $\rho$ is a smooth plurisubharmonic function on $\mathbb{C}^3$ by Lemma \ref{lem:smooth}, 
 $\Omega$ is a bounded pseudoconvex domain with smooth boundary. 
\end{proof}

\begin{proposition}
There exists an analytic disc on $b\Omega$ and thus $b\Omega$ fails Catlin’s Property \((P_1)\) and McNeal’s Property \((\tilde P_1)\).
\end{proposition}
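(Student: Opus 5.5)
The disc will be found inside the slice $\{w=0\}$, where $\Phi$ vanishes identically by Lemma~\ref{lem:smooth}. On this slice $\Psi(z,0)=C\vartheta(|z|^2-R_0^2)$, and this vanishes for $|z|\le R_0$ because $\vartheta\equiv 0$ on $(-\infty,0]$. So I take the map
\[
f:\{\zeta\in\mathbb C:|\zeta|<1\}\to\mathbb C^3,\qquad f(\zeta)=(R_0\zeta,\,0,\,1).
\]
It is holomorphic and nonconstant. For every $\zeta$ we have $\rho(f(\zeta))=|1|^2+\Phi(R_0\zeta,0)+C\vartheta(R_0^2|\zeta|^2-R_0^2)-1=0$, since $R_0^2|\zeta|^2-R_0^2\le 0$. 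Hence $f$ maps the disc into $b\Omega$. The same holds for the closed disc, and for the choice $u=e^{i\theta}$ with any fixed $\theta$. This proves the existence part.

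For the failure of $(P_1)$ and $(\tilde P_1)$, I would invoke the standard fact cited in the introduction (\cite{FS01,S06}): if $b\Omega$ contains a nonconstant analytic disc, then $b\Omega$ cannot satisfy $(P_1)$. I would also sketch the reason.

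Suppose $\lambda_M$ is defined on a neighborhood $U_M$ of $b\Omega$, satisfies $0\le\lambda_M\le1$, and has complex Hessian $\ge M$. Then $g(\zeta)=\lambda_M(f(\zeta))$ is defined for $|\zeta|<1+\delta$ with some small $\delta>0$, because the closed disc lies in $b\Omega$. By the chain rule, $\Delta g=4\,\partial\bar\partial\lambda_M(f'(\zeta),\overline{f'(\zeta)})\ge 4MR_0^2$. Compare $g$ with $h(\zeta)=g(\zeta)-MR_0^2|\zeta|^2$, which is subharmonic. The sub-mean value property at $0$ over the unit circle gives
\[
g(0)\le \frac{1}{2\pi}\int_0^{2\pi} g(e^{it})\,dt - MR_0^2.
\]
This forces $MR_0^2\le 1$, which is impossible for $M$ large.

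For $(\tilde P_1)$ the argument is the same, following \cite{M02}, with the self-bounded-gradient condition $|\partial\lambda|^2_{i\partial\bar\partial\lambda}\le 1$ replacing boundedness. Set $\mu=-e^{-\lambda}$, or rather use the equivalent reformulation: $(\tilde P)$ gives bounded functions with Hessian $\ge M$ after the substitution $\lambda\mapsto e^{\lambda/2}$ on compact pieces. On the compact disc $f(\{|\zeta|\le1\})$, this reduces to the previous contradiction.

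I expect the only delicate point to be this last reduction for $(\tilde P_1)$. Alternatively, I would simply cite \cite{FS01,S06} and \cite{M02}, where it is recorded that both properties imply compactness-type estimates incompatible with boundary discs. On the disc itself, the condition $|\partial g|^2\le \Delta g/4$ together with $\Delta g\ge 4MR_0^2$ yields, via $e^{-g}$ being superharmonic-like, a bound that again fails as $M\to\infty$.
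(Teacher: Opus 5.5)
Your disc is the paper's disc. The paper uses $H(\zeta)=(\zeta,0,u_0)$ on $|\zeta|<r<R_0$ with $|u_0|=1$. Your $f(\zeta)=(R_0\zeta,0,1)$ works equally well, even on the closed disc, since $\Phi(\cdot,0)\equiv0$ and $\vartheta\equiv0$ on $(-\infty,0]$. For $(P_1)$ the paper only cites \cite{FS01}. Your sub-mean-value argument with $h=g-MR_0^2|\zeta|^2$ is a correct self-contained substitute: it gives $0\le g(0)\le 1-MR_0^2$, impossible for large $M$.

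The $(\tilde P_1)$ sketch, however, has a real gap.
\begin{itemize}
\item The substitution $\lambda\mapsto e^{\lambda/2}$ does give $\partial\bar\partial e^{\lambda/2}\ge\frac M2e^{\lambda/2}$. But $(\tilde P)$ imposes no bound on $\lambda$. Once you normalize to values in $[0,1]$ on the disc, the lower bound becomes $\frac M2e^{(\lambda-\sup\lambda)/2}$, which can be arbitrarily small. Nothing in your argument controls the oscillation of $\lambda$ on the disc, so there is no reduction to the $(P_1)$ case.
\item The function $-e^{-\lambda}$ (equivalently, $e^{-g}$ superharmonic) is useless quantitatively. Its Hessian is $e^{-\lambda}(\partial\bar\partial\lambda-\partial\lambda\wedge\bar\partial\lambda)$, and the self-bounded gradient condition allows this to vanish entirely.
\end{itemize}

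What works is the exponent $\tfrac12$, which gives a scale-invariant inequality. With $g=\lambda\circ f$ you have $g_{\zeta\bar\zeta}\ge MR_0^2$ and $|g_\zeta|^2\le g_{\zeta\bar\zeta}$. Hence $w=e^{-g/2}>0$ satisfies
\[
w_{\zeta\bar\zeta}=w\Bigl(\tfrac14|g_\zeta|^2-\tfrac12g_{\zeta\bar\zeta}\Bigr)\le-\tfrac14g_{\zeta\bar\zeta}\,w\le-\tfrac{MR_0^2}{4}\,w,
\]
that is, $\Delta w\le-MR_0^2w$ near the closed unit disc. Let $\phi>0$ be the first Dirichlet eigenfunction of the unit disc, so $\Delta\phi=-\mu_1\phi$, with $\phi=0$ and $\partial_\nu\phi\le0$ on $|\zeta|=1$. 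Green's identity gives $(\mu_1-MR_0^2)\int w\phi\,dA\ge-\int_{|\zeta|=1}w\,\partial_\nu\phi\,ds\ge0$. Hence $MR_0^2\le\mu_1$, which fails for large $M$. Alternatively, simply cite \cite{S06}, as the paper does.

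Finally, drop the remark that both properties imply ``compactness-type estimates incompatible with boundary discs.'' The whole point of this paper is that $N_1$ is compact although $b\Omega$ contains a disc. So the failure of $(P_1)$ and $(\tilde P_1)$ cannot be routed through compactness. It must come from a direct potential-theoretic argument on the disc, as above.
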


We only construct an analytic disc in $b\Omega$. It is well known to experts that the existence of an analytic disc would be the obstruction to Catlin’s Property (\(P_1\)) and McNeal’s Property (\(\tilde P_1 \)) (for detailed proof, the reader may refer to \cite{FS01} for the former and page 1462 in \cite{S06} for the latter).

\begin{proof}
For every \(0<r<R_0\) and every \(|u_0|=1\), define $H( \zeta)=(\zeta,0,u_0)$
for $|\zeta|<r$. 
Because \(\Phi(\zeta, 0)=0\) and $\vartheta(|\zeta|^2-R_0^2)$=0 for all  $|\zeta|<r$, $H( \zeta) \in b\Omega$.
This implies that  $b\Omega$  contains an analytic disc.
%
\end{proof}

\section{Compactness of the $\bar\partial$-Neumann operator $N_1$}
\label{sec:green-compactness}

\subsection{Estimates near the weakly pseudoconvex set}

For \(t=|w|^2>0\), let
$
 W_j(t,z):=A_j d(x_j)
 =e^{-j^2}e^{-\tau_j t^{-1} e^{-j|z|^2}},$
and
$ S(t,z):=\sum_{j\geq2}W_j(t,z).$
Since \(\Phi, \Psi\) is radial in the \(w\)-variable, define its radial
representative by
$ \widehat{\Phi}(z,t) := \sum_{j=2}^{\infty} A_jG\left(\tau^{-1}_j t e^{j|z|^2}\right)$ and 
$\widehat{\Psi}(z,t) :=\widehat{\Phi}(z,t) +C\vartheta(|z|^2+t-R_0^2)$. 
%
Thus
$ \Phi(z,w)=\widehat{\Phi}(z,|w|^2),  \Psi(z,w)=\widehat{\Psi}(z,|w|^2)$. 
In what follows, we use the notation
$ \Phi_t(z,t) := \frac{\partial\widehat{\Phi}}{\partial t}(z,t), \Psi_t(z,t) := \frac{\partial\widehat{\Psi}}{\partial t}(z,t).$
%
It follows from term by term differentiation that 
$ S(t,z)=t\Phi_t(z,t).$
For fixed \(t>0\) and \(z\in\mathbb C\), 
 let \(J(t, z)\) be the first index at which \(W_j(t,z)\) is
maximal. Since \(0<W_j(t,z)\le e^{-j^2},\) $J(t, z)$ always exists as a positive integer.
%
Let $R(x)=\frac{d(x)}x$ for $x >0$.  Then \(R(x)>0\) for any \(x \in (0,\infty)\). 

\begin{lemma}
For every \(R>0\), there exist constants
\(c_R, C_R>0\) and \(t_R>0\) such that
\begin{align}
c_R\frac{\log(1/t)}{\log\log(1/t)}
&\le J(t,z)\le
C_R\frac{\log(1/t)}{\log\log(1/t)}
 \label{eq:compact-J-asymptotic}\\
 \sum_{j\geq2}jW_j(t,z)&\geq c_RJ(t,z)S(t,z),
 \label{eq:compact-mean-J}\\
 \sum_{j\geq2}A_jR(x_j)&\geq c_RS(t,z),
 \label{eq:compact-R-lower}\\
 \frac{\sum_{j\geq2}jA_jR(x_j)}
      {\sum_{j\geq2}A_jR(x_j)}
 &\leq C_RJ(t,z),
 \label{eq:compact-R-mean}
\end{align}
hold for \(|z|\le R,  0<t<t_R.\)
In particular,
\begin{equation}\label{eq:compact-J-limits}
 J(t,z)\rightarrow\infty,\quad
 tJ(t,z)^2\rightarrow0
\end{equation}
as $t \to 0$, 
uniformly for \(|z|\leq R\).
\end{lemma}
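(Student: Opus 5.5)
The plan is to analyze the exponent of $W_j$ directly. Write $L=\log(1/t)$ and set
\[
 f_j:=-\log W_j=j^2+\tau_j t^{-1}e^{-j|z|^2}=j^2+\tau_j e^{L-j|z|^2}.
\]
Since $\tau_{j+1}=\tau_j/(2j)$, we have $\log\tau_j=-j\log j+O(j)$. So for $|z|\le R$,
\[
 \log(\tau_je^{L-j|z|^2})=L-j\log j+O_R(j).
\]
The second term of $f_j$ is therefore super-exponentially decreasing in $j$, and the first term $j^2$ is increasing. The minimizer $J$ of $f_j$ sits where the two terms balance. For $j\le cL/\log L$ the second term is at least $e^{L/2}\gg j^2$. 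For $j\ge CL/\log L$ the second term is $\le e^{-L}$, so $f_j\approx j^2$ is increasing there. Comparing the value at $j_0\approx L/\log L$, where $f_{j_0}\lesssim j_0^2$, with the values for $j<cL/\log L$ forces $J\ge cL/\log L$. Comparing $f_j\ge j^2$ with $f_{j_0}$ forces $J\le CL/\log L$. This gives \eqref{eq:compact-J-asymptotic}. The limits \eqref{eq:compact-J-limits} follow immediately, since $tJ^2\le tC L^2\to0$.

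For \eqref{eq:compact-mean-J}, the key point is that $S$ is comparable to the mass of the indices $j\ge J/2$. I will compare consecutive ratios: for $j\le J/2$, the term $\tau_je^{L-j|z|^2}$ is at least $e^{(J-j)\log J/2}$-ish larger than its value at $J$. Here I use $\tau_j/\tau_J=\prod_{k=j}^{J-1}2k\ge J^{(J-j)/2}$ for $j\le J/2$, together with $e^{-j|z|^2}\ge e^{-J|z|^2}$. More simply, $W_j\le e^{-j^2}$, and $W_J$ is bounded below because $f_J\le f_{J'}$ for a suitable comparison index. I will show
\[
 \sum_{j<J/2}W_j\le \tfrac12 W_J\le \tfrac12 S,
\]
and then $\sum jW_j\ge (J/2)\sum_{j\ge J/2}W_j\ge (J/4)S$. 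The comparison uses $f_j-f_J\ge \tau_je^{L-j|z|^2}-J^2$ for $j<J/2$, and the fact that this quantity is huge because $\tau_j e^{L-j|z|^2}\ge (\tau_j/\tau_J)e^{-JR^2}\cdot\tau_Je^{L}\ge J^{J/4}e^{-JR^2}\cdot(\text{value at }J)$. One has to handle the case where the value at $J$ is small. Minimality of $f_J$ against $f_{J-1}$ gives $\tau_{J-1}e^{L-(J-1)|z|^2}\ge 2J-1$, so the second term at $j\le J/2$ is at least of order $J^{J/4}e^{-JR^2}\cdot J\gg J^2$. Summing $e^{-(f_j-f_J)}$ over $j<J/2$ then gives an arbitrarily small multiple of $W_J$.

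For \eqref{eq:compact-R-lower} and \eqref{eq:compact-R-mean}, note that $A_jR(x_j)=W_j/x_j$ with $x_j=t e^{j|z|^2}/\tau_j$, which increases rapidly in $j$. Lower bound: restrict to $j\le J$. Minimality gives $x_j^{-1}=\tau_je^{L-j|z|^2}\ge 1$ there, since $x_J^{-1}\ge x_{J-1}^{-1}/(2J)\cdot$ (bounded), hmm; more precisely $x_J^{-1}\ge$ some $c>0$ from the minimality comparison with $J+1$, together with $x_j^{-1}\ge x_J^{-1}$ for $j\le J$. Hence $\sum A_jR(x_j)\ge c\sum_{j\le J}W_j$. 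So it suffices to show that the tail $\sum_{j>J}W_j$ is at most $CW_J$, which holds because $f_j-f_J\ge j^2-J^2-x_J^{-1}\ge (j-J)J-O(J)$, and this is where the uniform $J$-lower bound helps. For the upper bound, $x_j^{-1}$ decays like $\prod 1/(2k)$ beyond $J$, so the weights $jW_j/x_j$ for $j>2J$ are dominated by $W_J/x_J$ times a summable factor. Restricting the numerator to $j\le 2J$ gives $\le 2J$ times the denominator.

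I expect the main obstacle to be \eqref{eq:compact-R-lower}. The reason is that one needs a lower bound on $x_J^{-1}$ from the discrete minimality of $f_J$, uniformly in $|z|\le R$. I would first try comparing $f_J\le f_{J+1}$, which gives $x_J^{-1}(1-e^{-|z|^2}/(2J))\ge -(2J+1)$. That is useless, so instead I would compare with a shifted index $J+k$ and with the continuous minimizer to show $x_J^{-1}\gtrsim J/\log J$.
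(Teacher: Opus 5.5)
\textbf{Gap: the lower bound on $y_J$ needed for $\sum_j A_jR(x_j)\ge c_RS$.}

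Your arguments for the size of $J$, for $\sum_j jW_j\ge c_RJS$, for the ratio bound $\le C_RJ$, and for the limits are workable. The step you flag as the main obstacle is not proved, though. That step is a uniform lower bound on $y_J:=x_J^{-1}=\tau_Je^{L-J|z|^2}$. The route you propose cannot work, because its target $y_J\gtrsim J/\log J$ is false.

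Comparisons $f_J\le f_{J+k}$ with $k\ge1$ only bound $y_J$ from above, since they say $y_J-y_{J+k}\le (J+k)^2-J^2$. The continuous-minimizer heuristic $y\approx 2j/\log j$ says nothing about the discrete minimizer. The reason is $y_{j+1}=y_j/(2je^{|z|^2})$, so the $y_j$ change by a factor of order $J$ per unit step.

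Concretely, as $t$ decreases the minimizing index moves to the right. Just after $J$ takes over from $J-1$ one has $f_{J-1}\approx f_J$, i.e. $y_{J-1}-y_J\approx 2J-1$. With $y_{J-1}=2(J-1)e^{|z|^2}y_J$ this gives $y_J\approx (2J-1)/(2(J-1)e^{|z|^2}-1)\approx e^{-|z|^2}$. So along a sequence $t\to0$, $y_J$ stays of order one. Your intermediate claim that $x_j^{-1}\ge1$ for $j\le J$ fails for the same reason.

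\textbf{The fix: compare with the left neighbour, not the right one.} Your abandoned line $x_J^{-1}\ge x_{J-1}^{-1}/(2J)\cdot(\text{bounded})$ was the right one. Since $J$ is the first minimizer of $f_j$, and $J\ge3$ for small $t$ by your first step, we have $f_J<f_{J-1}$, i.e. $y_{J-1}-y_J>2J-1$. You already derived $y_{J-1}\ge 2J-1$ in your argument for the $J$-weighted mean.

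Substituting $y_{J-1}=2(J-1)e^{|z|^2}y_J$ gives $y_J(2(J-1)e^{|z|^2}-1)>2J-1$. Hence $y_J\ge e^{-R^2}$ for $|z|\le R$. This is exactly the paper's $r_{J-1}\ge0\Rightarrow c_R\le y_J$. Combined with $y_j\ge y_J$ for $j\le J$ and a tail bound $\sum_{j>J}W_j\le CW_J$, it yields the estimate.

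\textbf{Two smaller repairs.}
\begin{enumerate}
\item In the argument for the $J$-weighted mean, the inequality $f_j-f_J\ge y_j-J^2$ drops the term $-y_J$. This is harmless, since $y_J\le y_j/4$ for $j<J$.
\item The cruder tail estimate $f_j-f_J\ge (j-J)J-O(J)$ does not by itself give $\sum_{j>J}W_j\le CW_J$ unless the $O(J)$ constant is at most $1$. Use $f_J\le f_{J+1}$ instead, i.e. $y_J(1-e^{-|z|^2}/(2J))\le 2J+1$. This gives $f_j-f_J\ge (j-J-1)(2J+1)-O(1)$ for $j>J$, which is summable.
\end{enumerate}
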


\begin{proof}
Write
$ b_j(z):=\tau_je^{-j|z|^2}, y_j:=\frac{b_j(z)}{t},  L_j:=2je^{|z|^2}.$
Then \(x_j=y^{-1}_j\), \(y_{j+1}=\frac{y_j}{L_j}\), and $ W_j=e^{-j^2-y_j}.$
Moreover, let 
\begin{equation}\label{eq:compact-ratio}
 r_j:=\log\frac{W_{j+1}}{W_j}
 =-(2j+1)+y_j(1-L_j^{-1}).
\end{equation}
It is easy to verify that the sequence \(r_j\) is strictly decreasing. Thus 
the sequence
\(W_j\) first increases and then decreases.  Since \(J\) is the smaller maximizing index
when a tie occurs, $ r_{J-1}\geq 0,  r_J\leq0.$

We first show that \(J(t,z)\to\infty\) uniformly for \(|z|\leq R\)
as \(t\to 0\). For every fixed \(j\geq2\),
\[
 y_j=\frac{\tau_je^{-j|z|^2}}{t}
 \geq\frac{\tau_je^{-jR^2}}{t},
\]
and therefore
\[
 r_j
 \geq
 -(2j+1)
 +\frac34\,\frac{\tau_je^{-jR^2}}{t}
 \rightarrow\infty
\]
uniformly for \(|z|\leq R\). Thus, for every fixed \(J_0\),
after decreasing \(t_R\), we have \(r_j>0\) for
\(2\leq j<J_0\), and consequently \(J(t,z)\geq J_0\).

For \(|z|\leq R\), $ 2j\leq L_j\leq2je^{R^2}.$
Combining with (\ref{eq:compact-ratio}), we reach 
\begin{equation}\label{eq:compact-active-y}
 c_R\leq y_J\leq C_RJ,\quad
 y_{J-1}\geq c_RJ,\quad
 \frac{c_R}{J}\leq x_J\leq C_R.
\end{equation}
We now show that  summands far from \(J\) are negligible.  If
\(h\geq2\), then
\(y_{J+h}\leq \frac{ y_J}{\prod_{\nu=J}^{J+h-1}2\nu}\).  
Combining with \eqref{eq:compact-ratio}, we obtain 
\begin{equation}\label{eq:compact-right-tail}
 W_{J+h}\leq W_{J+1}
 e^{-c_R(h-1)J-c_R(h-1)^2}.
\end{equation}
On the other hand, 
\[
 y_{J-h}=y_{J-1}\prod_{\nu=J-h}^{J-2}L_\nu
 \geq c_RJ\prod_{\nu=J-h}^{J-2}2\nu.
\]
For \(2\leq h\leq \frac{J}{2}\), it follows that 
\(y_{J-h}\ge c_RJ^h.\)
For
\(\frac{J}{2}<h\leq J-2\), it is also easy to see that \(y_{J-h}\ge c_RJ^{J/2}.\)
Both cases imply
$ \inf_{2\leq h\leq J-2} \frac{y_{J-h}}{hJ} \rightarrow\infty$ as 
$J\to\infty$.
It then follows from
$ \log\frac{W_{J-h}}{W_J} =J^2-(J-h)^2+y_J-y_{J-h}$
that 
\begin{equation*}
 \bigl(1+(J-h)+y_{J-h}+(J-h)y_{J-h}\bigr)W_{J-h}
 \leq C_Re^{-c_RhJ}W_J.
\end{equation*}
Set \(j=J-h\) and \(y=y_{J-h}\). Since
$ 1+j+y+jy=(1+j)(1+y)$
and \(y_{J-h}\geq c_RJ^2\) for \(h\geq2\), we have, for all sufficiently
large \(J\),
$\log\bigl((1+j)(1+y)\bigr)\leq \frac{y}{4}.$
Moreover, \(y_J\leq C_RJ\), and hence
$2Jh-h^2+y_J\leq A_RhJ$
for some \(A_R>0\). Since
$\inf_{2\leq h\leq J-2}\frac{y_{J-h}}{hJ}\rightarrow\infty,$
we may choose \(J_R\) so that, whenever \(J\geq J_R\),
$A_RhJ-\frac34y_{J-h}\leq-c_RhJ$
for every \(2\leq h\leq J-2\). Therefore,
\[
\begin{aligned}
\log\left(
\frac{
\bigl(1+(J-h)+y_{J-h}+(J-h)y_{J-h}\bigr)W_{J-h}}
{W_J}
\right)
&\leq -c_RhJ.
\end{aligned}
\]
%
Therefore 
\[
\begin{aligned}
\sum_{h=2}^{J-2}
 \bigl(1+(J-h)+y_{J-h}+(J-h)y_{J-h}\bigr)W_{J-h} \le
C_RW_J\sum_{h=2}^{J-2}e^{-c_RhJ}
\le C_Re^{-c_RJ}W_J,
\end{aligned}
\]after changing \(c_R\) and \(C_R\).
It follows from (\ref{eq:compact-right-tail}) that 
\[
\begin{aligned}
&\sum_{h=2}^{\infty}
 \bigl(1+(J+h)+y_{J+h}+(J+h)y_{J+h}\bigr)W_{J+h}\\
&\quad
\le
C_RW_{J+1}
\sum_{h=2}^{\infty}
(J+h+1)e^{-c_R(h-1)J-c_R(h-1)^2}\\
&\quad\le
C_Re^{-c_RJ}W_{J+1}.
\end{aligned}
\]
It then follows that
\begin{equation*}
\sum_{|j-J|\ge2}W_j
\le \sum_{|j-J|\geq2}(1+j+y_j+jy_j)W_j
 \leq C_Re^{-c_RJ}(W_J+W_{J+1})
\end{equation*}
and thus 
\begin{equation}\label{eq:compact-active-band}
 S(t,z)\asymp_R W_{J-1}+W_J+W_{J+1}\asymp_R W_J, 
\end{equation}
where $A\asymp_R B$ means that $A, B$ are comparable up to positive constants depending on $R$. This implies  \eqref{eq:compact-mean-J}. 
It follows from \eqref{eq:compact-active-y} that 
$$ c_R\leq\frac{b_J}{t}\leq C_RJ.$$
Consequently, there exists \(A_R>0\) such that
\[ \left| \log\frac1t-\log\frac1{b_J} \right| \le A_R\log J \]
holds for all sufficiently large \(J\). 
On the other hand, using  $ b_J=\tau_Je^{-J|z|^2}$ and 
$ \tau_J = \frac{\tau_2}{2^{J-2}(J-1)!},$
It follows from Stirling's formula  $ \log((J-1)!) = J\log J+O(J)$ that
$$  \left| \log\frac{1}{b_J} -  J\log J\right| \leq A'_R J.$$
Therefore, $$  \left| \log\frac{1}{t} -  J\log J\right| \leq A''_R J.$$
One can verify that this implies \eqref{eq:compact-J-asymptotic}.
Furthermore, since
$ A_jR(x_j)=A_j\frac{d(x_j)}{x_j}=y_jW_j,$ 
 \eqref{eq:compact-active-y} and
\eqref{eq:compact-active-band} yield
\[
 \sum_jA_jR(x_j)\geq y_JW_J\geq c_RW_J\geq c_RS,
\]
which proves \eqref{eq:compact-R-lower}. 
Combining 
$$ \sum_{j=J-1}^{J+1} j\,y_jW_j \le 2J\sum_{j=J-1}^{J+1} y_jW_j \le 2J \sum_{j=2}^{\infty} y_jW_j $$
and 
$$ \sum_{|j-J|
\geq 2}j\,y_jW_j \le \sum_{|j-J|
\geq 2}(1+j+y_j+jy_j)W_j \le C_Re^{-c_RJ}(W_J+W_{J+1}),$$
we obtain
\[
 \sum_jjA_jR(x_j)=\sum_jjy_jW_j
 \leq C_RJ\sum_jy_jW_j,
\]
which is \eqref{eq:compact-R-mean}.  Finally,
\eqref{eq:compact-J-limits} follows from
\eqref{eq:compact-J-asymptotic}.
We note that all constants  above depend only on \(R\). Since
\(J(t,z)\to\infty\) uniformly for \(|z|\leq R\) as \(t\to0\), after
choosing \(J_R\) we may decrease \(t_R\) so that
$J(t,z)\geq J_R$ 
for every \(|z|\leq R\) and \(0<t<t_R\). Thus the preceding 
estimates hold uniformly in \(z\) and in every  \(h\).
\end{proof}


\begin{lemma}\label{lem:compact-matrix}
For every \(R>0\), there exist \(t_0>0\) and \(c>0\) such that
\begin{equation*}
\left( \partial\bar\partial \Phi(z,w)\right)_{2 \times 2}\geq c J(t,z)S(t,z)I_2,
\end{equation*}
for $ |z|\leq R,  0<t<t_0$.
\end{lemma}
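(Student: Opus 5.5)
The plan is to compute the complex Hessian of $\Phi$ term by term from \eqref{eq:compact-layer-Hessian}. I would then bound its smallest eigenvalue from below by $\det/\operatorname{tr}$, which is legitimate for a positive semidefinite $2\times 2$ matrix.

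\emph{Step 1: the matrix entries.} Write $R_j:=A_jR(x_j)=y_jW_j$. Summing \eqref{eq:compact-layer-Hessian} over $j$ (allowed by Lemma~\ref{lem:smooth}), for $w\neq0$ and $t=|w|^2$ the Hessian matrix $M$ of $\Phi$ has entries
\[
M_{z\bar z}=\sum_j jW_j+|z|^2\sum_j j^2R_j,\qquad
M_{w\bar w}=\frac1t\sum_j R_j,\qquad
|M_{z\bar w}|=\frac{|z|}{\sqrt t}\sum_j jR_j .
\]
So $M$ is a diagonal part $\sum_j jW_j\,|dz|^2$ plus the positive semidefinite sum $\sum_j R_j|j\bar z\,dz+dw/w|^2$.

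\emph{Step 2: the determinant.} Expanding gives
\[
\det M=\frac1t\Bigl(\sum_j jW_j\Bigr)\Bigl(\sum_j R_j\Bigr)+\frac{|z|^2}{t}\Bigl[\Bigl(\sum_j j^2R_j\Bigr)\Bigl(\sum_j R_j\Bigr)-\Bigl(\sum_j jR_j\Bigr)^2\Bigr].
\]
The bracket is nonnegative by Cauchy--Schwarz. Hence $\det M\ge t^{-1}\bigl(\sum_j jW_j\bigr)\bigl(\sum_j R_j\bigr)$.

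\emph{Step 3: the trace.} I want to show $\operatorname{tr}M\le C_R\,t^{-1}\sum_j R_j$ for $|z|\le R$ and small $t$. This needs three bounds.
\begin{itemize}
\item By \eqref{eq:compact-active-band} and the tail estimates, $\sum_j jW_j\le C_RJS$. By \eqref{eq:compact-R-lower}, $S\le C_R\sum_j R_j$. Since $tJ\to0$ by \eqref{eq:compact-J-limits}, this term is $\ll t^{-1}\sum_j R_j$.
\item The main obstacle is a second-moment version of \eqref{eq:compact-R-mean}, namely $\sum_j j^2R_j\le C_RJ^2\sum_j R_j$.
\item Granting it, $|z|^2\sum_j j^2R_j\le C_R\,tJ^2\cdot t^{-1}\sum_j R_j$, and $tJ^2\to0$ by \eqref{eq:compact-J-limits}.
\end{itemize}
To prove the second-moment bound I would repeat the proof of \eqref{eq:compact-R-mean}. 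On the band $|j-J|\le1$ one has $j^2\le 4J^2$. Off the band, the proof of the preceding lemma gives exponential decay $e^{-c_RhJ}$ on the left and $e^{-c_R(h-1)J-c_R(h-1)^2}$ on the right. That decay absorbs an extra polynomial factor $j$, giving $\sum_{|j-J|\ge2}j^2y_jW_j\le C_Re^{-c_RJ}(W_J+W_{J+1})$. Finally $W_J\le C_R\,y_JW_J$ because $y_J\ge c_R$ by \eqref{eq:compact-active-y}.

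\emph{Step 4: conclusion.} Combining Steps 2 and 3,
\[
\lambda_{\min}(M)\ge\frac{\det M}{\operatorname{tr}M}\ge c_R\sum_j jW_j\ge c_RJ(t,z)S(t,z),
\]
where the last inequality is \eqref{eq:compact-mean-J}. Here $t_0$ is chosen so that the smallness conditions $tJ\le\epsilon$ and $tJ^2\le\epsilon$ hold uniformly for $|z|\le R$, and $t_0\le t_R$.
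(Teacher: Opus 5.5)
Your proof is correct, but it takes a slightly different route from the paper.

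\textbf{The paper's route.} The paper applies Sylvester's criterion directly to the shifted matrix $\bigl(\partial\bar\partial\Phi\bigr)-\mu I_2$ with $\mu=\frac{c_R}{4}JS$.
\begin{itemize}
\item The first leading minor needs only the lower bound \eqref{eq:compact-mean-J}.
\item In the determinant, in your notation $R_j=A_jR(x_j)$, the $|z|^2$ terms are regrouped as $\bigl(\frac{\sum R_j}{t}-\mu\bigr)\bigl(\sum j^2R_j-\frac{(\sum jR_j)^2}{\sum R_j}\bigr)-\mu\frac{(\sum jR_j)^2}{\sum R_j}$.
\item The first piece is discarded by Cauchy--Schwarz. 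The second is controlled by the first-moment bound \eqref{eq:compact-R-mean} together with $tJ^2\to0$.
\end{itemize}
Because the paper never needs an upper bound on the $z\bar z$ entry, it gets by with the estimates already stated in the preceding lemma.

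\textbf{Your route.} The criterion $\lambda_{\min}\ge\det/\operatorname{tr}$ makes the determinant step cleaner. The Cauchy--Schwarz bracket simply drops out of the unshifted determinant, and no regrouping is needed. You also get the slightly sharper conclusion $\lambda_{\min}\ge c_R\sum_j jW_j$.

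The price is that you must bound the trace from above. That requires two extra inputs:
\begin{itemize}
\item $\sum_j jW_j\le C_RJS$;
\item the second-moment bound $\sum_j j^2R_j\le C_RJ^2\sum_j R_j$.
\end{itemize}
Neither appears in the statement of the preceding lemma. You correctly extract them from \eqref{eq:compact-active-band}, \eqref{eq:compact-active-y} and the exponential tail estimates in its proof. On the left tail, where $j\le J$, the decay $e^{-c_RhJ}$ absorbs the extra factor $j$. On the right tail, the decay $e^{-c_R(h-1)J-c_R(h-1)^2}$ does the same.

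Note that your second-moment bound implies \eqref{eq:compact-R-mean} by Cauchy--Schwarz. So your argument relies on a strictly stronger input than the paper's, in exchange for a more transparent determinant computation.
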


\begin{proof}
Since $t=|w|^2>0$, we only need to prove the lemma for  \(w\neq0\). We apply Sylvester’s criterion to verify the positivity of the matrix  
$\left( \partial\bar\partial \Phi(z,w)\right)_{2 \times 2} - c_RJ(t,z)S(t,z)I_2$. 
By (\ref{eq:Phi}), (\ref{eq:compact-layer-Hessian}), (\ref{eq:compact-mean-J}), the first leading principal minor satisfies
$$ \sum_{j\ge2}jW_j +|z|^2 \sum_{j\ge2}j^2A_jR(x_j) -\frac {c_R}4 JS \ge \sum_{j\ge2}jW_j -\frac {c_R}4 JS \ge  \frac{3c_R}{4} JS \geq 0.
$$
Note that it follows from Cauchy--Schwarz that $$ \left( \sum_{j\ge2}j A_jR(x_j) \right)^2 \le \left( \sum_{j\ge2} A_jR(x_j) \right) \left(\sum_{j\ge2}j^2A_jR(x_j) \right) .$$
The determinant satisfies
\begin{align*}
& \left(\sum_{j\ge2}jW_j - \frac {c_R}4 JS \right)\left(\frac{\sum_{j\ge2}A_jR(x_j)}{t}-\frac {c_R}4 JS \right) \\
  &~~ +|z|^2\left[
      \left(\frac{\sum_{j\ge2} A_jR(x_j)}{t}- \frac {c_R}4 JS\right)\left( \sum_{j\ge2}j^2A_jR(x_j) \right)
      -\frac{\left( \sum_{j\ge2}j A_jR(x_j) \right)^2}{t}    \right]\\
 &= \left(\sum_{j\ge2}jW_j - \frac {c_R}4 JS \right)\left(\frac{\sum_{j\ge2}A_jR(x_j)}{t}-\frac {c_R}4 JS \right) \\
  &~~  +|z|^2\left[
      \left(\frac{\sum_{j\ge2} A_jR(x_j)}{t}- \frac {c_R}4 JS\right)\left( \sum_{j\ge2}j^2A_jR(x_j) -\frac{\left( \sum_{j\ge2}j A_jR(x_j) \right)^2}{\sum_{j\ge2} A_jR(x_j)}  \right)
      - \frac {c_R}4 JS\frac{\left( \sum_{j\ge2}j A_jR(x_j) \right)^2}{\sum_{j\ge2} A_jR(x_j)}    \right]\\
& \geq   \left(\sum_{j\ge2}jW_j - \frac {c_R}4 JS \right)\left(\frac{\sum_{j\ge2}A_jR(x_j)}{t}-\frac {c_R}4 JS \right) - \frac {c_R}4 |z|^2 JS\frac{\left( \sum_{j\ge2}j A_jR(x_j) \right)^2}{\sum_{j\ge2} A_jR(x_j)} \\
 &\ge  \frac {3{c_R}}4 JS \frac{\sum_{j\ge2} A_jR(x_j)}{2t} - \frac {c_R}4 JS \frac{\sum_{j\ge2} A_jR(x_j)}{t} \geq 0,
 \end{align*}
where the second inequality follows from (\ref{eq:compact-mean-J}), (\ref{eq:compact-R-lower}), (\ref{eq:compact-J-limits}).
\end{proof}

Consider
\[
 B:=\{(z,w)\in\mathbb C^2:\Psi(z,w)<1\}
\]
to be the base of the Hartogs domain $\Omega$.  On \(B\), define
\begin{equation*}
 \varphi(z,w):=-\frac12\log(1-\Psi(z,w))
\end{equation*}
and write 
\[
 \Omega=\{(z,w,u)\in\mathbb C^3 :(z,w)\in B,\ |u|<e^{-\varphi(z,w)}\}.
\]

\begin{proposition}
The weakly pseudoconvex set of \(M=b\Omega\) is
\begin{equation*}\label{eq:compact-weak-set}
 K:=\{(z,0,u):|z|\leq R_0,\ |u|=1\}.
\end{equation*}
\end{proposition}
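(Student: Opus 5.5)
We want to show that the set of weakly pseudoconvex boundary points of $M=b\Omega$ is exactly $K$. Since $\Omega$ is a Hartogs domain over $B$ with fiber radius $e^{-\varphi}$, the standard criterion applies. At a boundary point with $|u|=e^{-\varphi(z,w)}$ and $(z,w)\in B$, the Levi form of $b\Omega$ restricted to the complex tangent space is strictly positive if and only if the $2\times2$ complex Hessian of $\varphi$ at $(z,w)$ is positive definite. Indeed, $\rho'=\log|u|^2+2\varphi$ is a local defining function away from $u=0$, the term $\log|u|^2$ is pluriharmonic, and the complex tangent directions project onto all of $\mathbb C^2$ in the $(z,w)$ variables. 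Points with $u=0$ lie over $b B$ and need separate treatment. There $\Psi=1$ and the defining function $\rho=|u|^2+\Psi-1$ works directly: $\partial\bar\partial\rho=|du|^2+\partial\bar\partial\Psi$, and the tangency condition forces the $(z,w)$-component of a tangent vector to lie in the complex tangent space of $bB$. So the analysis reduces to computing $\partial\bar\partial\varphi$, or $\partial\bar\partial\Psi$ on $T^{1,0}bB$.

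\textbf{Step 1: formula for $\partial\bar\partial\varphi$.} Write $\varphi=-\tfrac12\log(1-\Psi)$. Then
\[
\partial\bar\partial\varphi=\frac{\partial\bar\partial\Psi}{2(1-\Psi)}+\frac{\partial\Psi\wedge\bar\partial\Psi}{2(1-\Psi)^2}.
\]
Both terms are $\ge0$. Hence $\partial\bar\partial\varphi$ is positive definite at $(z,w)$ unless some vector $v\ne0$ satisfies both $\partial\bar\partial\Psi(v,\bar v)=0$ and $\partial\Psi(v)=0$.

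\textbf{Step 2: strict positivity where $w\neq0$.} For $w\ne0$, formula \eqref{eq:compact-layer-Hessian} shows that $\partial\bar\partial\Phi$ is positive definite. It is a sum over $j$ of the rank-one forms $|j\bar z\,dz+dw/w|^2$ with positive coefficients plus $j\,d(x_j)|dz|^2$. These forms span all directions as soon as two indices $j$ have $d(x_j)>0$, and for $w\neq0$ every index does. (Lemma~\ref{lem:compact-matrix} gives this quantitatively for small $t$, but the general case follows from the same expression.) Adding $C\,\partial\bar\partial\vartheta\ge0$ preserves definiteness, so these points are strongly pseudoconvex, including those with $u=0$, by the $\rho$ computation.

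\textbf{Step 3: the set $w=0$.} Here every derivative of $\Phi$ vanishes by Lemma~\ref{lem:smooth}, so $\Psi=C\vartheta(|z|^2-R_0^2)$ to all orders in $w$, and $\partial\bar\partial\Psi=C\bigl(\vartheta'\,\partial\bar\partial(|z|^2+|w|^2)+\vartheta''|\partial(|z|^2+|w|^2)|^2\bigr)$.

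If $|z|>R_0$, then $\vartheta'>0$ and the Hessian is positive definite. (If additionally $u=0$, the $\rho$ computation applies as well.)

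If $|z|\le R_0$, then $\vartheta$ and all its derivatives vanish at $s=|z|^2-R_0^2\le0$. To see this at $s=0$: $\vartheta$ is smooth and vanishes identically on $(-\infty,0]$, so all derivatives are $0$ there. Consequently $\Psi$, $\partial\Psi$ and $\partial\bar\partial\Psi$ all vanish, which forces $\varphi=0$, $|u|=1$, and $\partial\bar\partial\varphi=0$. These points are therefore weakly pseudoconvex, and they are exactly $K$.

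The main obstacle is the bookkeeping at the points with $u=0$ and the justification that the Levi form restricted to the complex tangent space is equivalent to $\partial\bar\partial\varphi$ on $\mathbb C^2$. Explicitly, the complex tangent vectors are $(v,\xi)$ with $\bar u\,\xi+\partial\Psi(v)=0$. For $u\neq0$, the Levi form then becomes $|\partial\Psi(v)|^2/|u|^2+\partial\bar\partial\Psi(v,\bar v)$, which equals $2(1-\Psi)\,\partial\bar\partial\varphi(v,\bar v)$ because $|u|^2=1-\Psi$. This identity settles Steps 1–3 uniformly; only the boundary points of $B$ with $u=0$ are handled by the direct argument in Step 2 and Step 3.
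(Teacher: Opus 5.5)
Your argument is correct, but it links the boundary Levi form to $\partial\bar\partial\Psi$ differently from the paper.

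The paper never passes to $\varphi$. It uses the single global plurisubharmonic defining function $\rho=|u|^2+\Psi-1$, whose ambient complex Hessian is $|\nu|^2+\mathcal L_\Phi(\xi,\eta)+C\vartheta'(s)(|\xi|^2+|\eta|^2)+C\vartheta''(s)|\bar z\xi+\bar w\eta|^2$. Off $K$ there are two cases: either $w\neq0$, where $\mathcal L_\Phi>0$, or $w=0$ with $|z|>R_0$, where $\vartheta'(s)>0$. In both cases this Hessian is positive definite on all of $\mathbb C^3$, so strict pseudoconvexity follows without identifying $T^{1,0}_pM$ at all. Only at points of $K$ does the paper compute the tangent space: there $\partial\rho=\bar u\,du$, so $T^{1,0}_pM=\{\nu=0\}$ and the Levi form vanishes on it. Consequently the paper needs no case split according to whether $u=0$, and the step you single out as the main obstacle never arises.

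Your Hartogs reduction gives something sharper than the statement requires. The identity $\mathcal L_\rho|_{T^{1,0}M}=2(1-\Psi)\,\partial\bar\partial\varphi$ for $u\neq0$ is an exact characterization: weak pseudoconvexity is precisely degeneracy of $\partial\bar\partial\varphi$ on the base. That is the form in which the geometry is used later, in the lower bound for $\partial\bar\partial\varphi$ near $K$ and in the weighted forms $Q_{n,\varphi}$. The price is the separate treatment of the points over $bB$ where $u=0$.
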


\begin{proof}
Recall that $ \rho(z,w,u)=|u|^2+\Psi(z,w)-1$
is a smooth defining function for \(\Omega\). Set
\(s=|z|^2+|w|^2-R_0^2\). For \(p=(z,w,u)\) and
\(V=(\xi,\eta,\nu)\in\mathbb C^3\), the Levi form is
\[
 \mathcal L_\rho(p;V)
 =|\nu|^2+\mathcal L_\Phi((z,w);(\xi,\eta))
 +C\vartheta'(s)(|\xi|^2+|\eta|^2)
 +C\vartheta''(s)|\bar z\,\xi+\bar w\,\eta|^2.
\]

Let \(p\in K\). By Lemma~\ref{lem:smooth}, all derivatives
of \(\Phi\) vanish at \(w=0\). 
Consequently, $ \partial\Psi(z,0)=0,  \partial\bar\partial\Psi(z,0)=0$ and thus $\partial\rho(p)=\bar u\,du.$ 
Moreover, 
$ \vartheta(s)=\vartheta'(s)=\vartheta''(s)=0$ for $s\leq0$.
Since \(|u|=1\), the complex tangent space is
$ T_p^{1,0}M=\{(\xi,\eta,\nu)\in\mathbb C^3:\nu=0\}.$
Hence \(\mathcal L_\rho(p;V)=0\) for every
\(V\in T_p^{1,0}M\), so every point of \(K\) is weakly
pseudoconvex.

Conversely, let \(p\in M\setminus K\). If \(w\neq0\),
then it follows from \eqref{eq:Phi} and \eqref{eq:compact-layer-Hessian}
that
\[
 \mathcal L_\Phi((z,w);(\xi,\eta))
 =
 \sum_{j\geq2}A_j
 \left[
 j\,d(x_j)|\xi|^2
 +\frac{d(x_j)}{x_j}
 \left|j\bar z\,\xi+\frac{\eta}{w}\right|^2 
 \right]>0
\]
whenever \((\xi,\eta)\neq(0,0)\). 
It follows that
\(\mathcal L_\rho(p;V)>0\) for every nonzero \(V\in\mathbb C^3\).
%
If \(w=0\), then necessarily \(|z|>R_0\). 
Thus \(s>0\) and
\(\vartheta'(s)>0\). Since \(\Phi\) is plurisubharmonic,
\[
 \mathcal L_\rho(p;V)
 \geq |\nu|^2+C\vartheta'(s)(|\xi|^2+|\eta|^2)>0
\]
for  all $V\neq0$.
Therefore
the weakly pseudoconvex set of \(M\) is exactly
the set \(K\).
\end{proof}

\begin{proposition}
\begin{equation}\label{eq:compact-varphi-ratio}
\left( \partial \bar\partial \varphi \right)_{2\times 2}\geq c_RJ(t,z)(t\varphi_t)I_2
\end{equation}
holds in a neighborhood of $K$.
\end{proposition}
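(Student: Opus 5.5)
Near $K$ we have $|z|\le R_0+\delta$ and $t=|w|^2$ small, so I fix $R=R_0+1$ and work on $\{|z|\le R,\ 0<t<t_0\}$. I also shrink the neighborhood so that $s=|z|^2+t-R_0^2$ is small. On this set $\vartheta$ contributes only a plurisubharmonic term. For $t=0$ both sides of \eqref{eq:compact-varphi-ratio} vanish, since all derivatives of $\Phi$ vanish there (Lemma~\ref{lem:smooth}) and $\vartheta\equiv0$ for $s\le0$. The same holds for $s>0$ up to the nonnegative $\vartheta$-terms, so it suffices to treat $w\neq0$. Writing $\varphi=-\tfrac12\log(1-\Psi)$, I compute
\[
\partial\bar\partial\varphi=\frac{\partial\bar\partial\Psi}{2(1-\Psi)}+\frac{\partial\Psi\wedge\bar\partial\Psi}{2(1-\Psi)^2}\ \ge\ \frac{\partial\bar\partial\Phi}{2(1-\Psi)} ,
\]
using that $\vartheta(|z|^2+|w|^2-R_0^2)$ is plurisubharmonic ($\vartheta$ convex increasing) and that the gradient term is nonnegative. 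I then invoke Lemma~\ref{lem:compact-matrix} to get $\partial\bar\partial\varphi\ge \frac{c\,J S}{2(1-\Psi)}I_2$.

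Next I express the right-hand side of \eqref{eq:compact-varphi-ratio} in terms of $S$. Since $\varphi$ depends on $w$ only through $t$,
\[
t\varphi_t=\frac{t\Psi_t}{2(1-\Psi)}=\frac{t\Phi_t+Ct\,\vartheta'(s)}{2(1-\Psi)}=\frac{S+Ct\,\vartheta'(s)}{2(1-\Psi)},
\]
where I use $S=t\Phi_t$. When $s\le 0$ we have $\vartheta'(s)=0$, and the claim follows with constant $c$ from Lemma~\ref{lem:compact-matrix}.

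The main obstacle is the extra term $Ct\vartheta'(s)$ for points with $|z|$ slightly larger than $R_0$, where it is nonzero. There $\vartheta'(s)>0$, and the $\vartheta$-part of $\partial\bar\partial\Psi$ contributes at least $C\vartheta'(s)I_2$. I will therefore keep that term rather than discard it:
\[
\partial\bar\partial\varphi\ \ge\ \frac{cJS+C\vartheta'(s)}{2(1-\Psi)}I_2 .
\]
It remains to check $C\vartheta'(s)\ge c' J\,C t\vartheta'(s)$, that is, $tJ\le 1/c'$. This holds for small $t$ by \eqref{eq:compact-J-limits}, since $tJ^2\to0$ uniformly for $|z|\le R$. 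Combining the two pieces gives $\partial\bar\partial\varphi\ge \min(c,1/\sup tJ)\,J\,(t\varphi_t)\,I_2$ on $\{|z|\le R,\ 0<t<t_0\}\cap B$. Finally, I shrink to a neighborhood of $K$ on which $\Psi<1$ stays bounded away from $1$ only as needed; the factor $(1-\Psi)^{-1}$ appears identically on both sides, so it cancels and no lower bound on $1-\Psi$ is needed.
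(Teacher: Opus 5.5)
Your argument is correct and is essentially the paper's proof. Both drop the gradient term in $\partial\bar\partial\varphi$, use $C\partial\bar\partial\vartheta\ge C\vartheta'(s)I_2\ge c\,J\,Ct\vartheta'(s)I_2$ via $tJ\to0$, combine this with Lemma~\ref{lem:compact-matrix} and $S=t\Phi_t$ to get $\partial\bar\partial\Psi\ge c\,J\,t\Psi_t I_2$, and divide by $2(1-\Psi)$. The one point the paper treats more carefully is $t=0$, where $J$ is undefined: instead of asserting that both sides vanish, it shows $J\,t\varphi_t\le C_R(Je^{-J^2}+tJ)\to0$ uniformly, so the right-hand side extends by $0$ and the inequality there reduces to $\partial\bar\partial\varphi\ge0$.
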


\begin{proof}
Let $ K_0:=\{(z,0):|z|\leq R_0\}\subset B$
be the projection of \(K\) to the base. Since \(\Psi=0\)
on \(K_0\), we may  choose
\(R>R_0\) and \(\varepsilon>0\) such that
$ 0\leq\Psi(z,w)\leq\frac12$ for 
$|z|\leq R$ and $ |w|^2\leq\varepsilon$.
Let $ U:=\{(z,w):|z|<R,\ |w|^2<\varepsilon\} \Subset B$
be a neighborhood of \(K_0\). All subsequent choices of
\(\varepsilon\) are uniform for \(|z|\leq R\).
It follows from \eqref{eq:compact-J-limits} that 
$$\sqrt{-1} C \partial\bar\partial \vartheta
 = \sqrt{-1} C\left( \vartheta'(s)(dz \wedge d\bar z +d w\wedge d\bar w) +\vartheta''(s)\,
    \partial s\wedge \overline{\partial s} \right)
  \geq \sqrt{-1}c_RJ(t,z)\,Ct\vartheta'(s)(dz \wedge d\bar z +d w\wedge d\bar w).$$
By Lemma~\ref{lem:compact-matrix} and $ t\Phi_t(z,t) =S(t,z)$, we have 
\[
 \sqrt{-1}  \partial\bar\partial  \Psi
 =\sqrt{-1} \partial\bar\partial  \Phi+ \sqrt{-1} C \partial\bar\partial \vartheta
  \geq  \sqrt{-1}  c_R t J(t,z) \Psi_t (dz \wedge d\bar z +d w\wedge d\bar w).
\]
Therefore,
\begin{equation*}
\begin{split}
 \sqrt{-1}  \partial\bar\partial  \varphi
& =\frac{ \sqrt{-1}  \partial\bar\partial  \Psi}{2(1-\Psi)}
  +\frac{ \sqrt{-1} \partial\Psi\wedge\overline{\partial\Psi}}
         {2(1-\Psi)^2} 
\geq\frac{\sqrt{-1}  \partial\bar\partial \Psi}{2(1-\Psi)} \\
& \geq   \sqrt{-1} c_RJ(t,z)\frac{t\Psi_t}{2(1-\Psi)} (dz \wedge d\bar z +d w\wedge d\bar w) \\
& = \sqrt{-1} c_RJ(t,z)(t\varphi_t)(dz \wedge d\bar z +d w\wedge d\bar w).
 \end{split}
 \end{equation*}
This proves \eqref{eq:compact-varphi-ratio} on
\(U\cap\{w\neq0\}\).

Furthermore,
\[
0\le J(t,z)t\varphi_t(z,t)
\le C_R\bigl(J(t,z)e^{-J(t,z)^2}+tJ(t,z)\bigr)
\rightarrow0
\]
uniformly for \(|z|\le R\) as \(t\to0\). We therefore define
\(J(t,z)t\varphi_t(z,t)=0\) when \(t=0\). With this convention, the
right-hand side of \eqref{eq:compact-varphi-ratio} extends continuously
to \(w=0\), where the inequality follows from
\(\partial\bar\partial\varphi\ge0\).
\end{proof}

\subsection{Compactness}

For a \((0,1)\)-form $U=U_1\,d\overline z+U_2\,d\overline w \in L^2_{(0,1)}(B,e^{-2n\varphi})$
on the base $B$, write
\[
 \|U\|_{2n\varphi}^2
 :=\int_B|U|^2e^{-2n\varphi}\,dV.
\]
Let $\operatorname{Dom}Q_{n,\varphi}=\operatorname{Dom}\bar\partial\cap \operatorname{Dom}\bar\partial_{2n\varphi}^{*} \cap L^2_{(0,1)}(B,e^{-2n\varphi})$
and define \[ Q_{n,\varphi}(U) = \|\bar\partial U\|_{2n\varphi}^{2} + \|\bar\partial_{2n\varphi}^{*}U\|_{2n\varphi}^{2} \] for $U \in \operatorname{Dom}Q_{n,\varphi}$. 
It then follows from the
Morrey--Kohn--Hörmander identity that, 
\begin{equation}\label{eq:compact-Morrey-form}
 Q_{n,\varphi}(U)
 =\sum_{a,b=1}^2
   \left\|\frac{\partial U_a}{\partial\overline z_b}\right\|_{2n\varphi}^2
 +2n \int_B \sum_{a,b=1}^2  \frac{\partial^2 \varphi}{\partial z_a \partial \bar z_b} U_a \overline{U_b}
       e^{-2n\varphi}\,dV
\end{equation}
hold for any $U  \in \operatorname{Dom}Q_{n,\varphi}$ with compact support in $B$, 
where \(z_1=z\) and \(z_2=w\).  

\begin{lemma}
\label{lem:compact-radial}
Let $P$ be an open neighborhood of $K_0$ in the base and $c>0$. Let \(\varphi(z,w)=\varphi(z,|w|^2)\) be such that $\sqrt{-1}\partial\bar\partial \varphi$ is 
positive definite for \(w\neq0\). If for every \(M>0\), there is
\(\eta_M>0\), with \(M\eta_M\) as small as desired, such that
\begin{equation}\label{eq:compact-radial-hypothesis}
\left( \partial\bar\partial \varphi\right)_{2 \times 2} \geq cM t\varphi_t I_2
\end{equation}
holds on $P \cap \{0<t<2\eta_M\}$, 
then, for every \(A>0\), there exists \(n_A\) such that
\begin{equation*}\label{eq:compact-local-gap}
 Q_{n,\varphi}(U)\geq A\|U\|_{2n\varphi}^2
\end{equation*}
holds for every \( U\in\operatorname{Dom}Q_{n,\varphi}\) with support in \(P' \Subset P\) and 
$n\geq n_A$.
\end{lemma}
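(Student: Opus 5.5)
The plan is to split the support of \(U\) into a region far from \(\{w=0\}\) and a thin layer \(\{0<t<2\eta_M\}\). The Hessian term in \eqref{eq:compact-Morrey-form} handles the far region directly. In the thin layer, a radial integration by parts in the \(w\)-variable converts the quantity \(n\int t\varphi_t|U|^2e^{-2n\varphi}\) into a bound for \(\|U\|^2_{2n\varphi}\); the hypothesis \eqref{eq:compact-radial-hypothesis} then controls that quantity by \(Q_{n,\varphi}(U)/M\). Given \(A\), I first choose \(M\) large, then \(\eta_M\) small, and finally \(n\) large.

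\textbf{Step 1 (far region).} On \(\overline{P'}\cap\{t\ge\eta_M\}\), which is compact and avoids \(w=0\), \(\sqrt{-1}\partial\bar\partial\varphi\) is positive definite by assumption. Its smallest eigenvalue is therefore bounded below by some \(\lambda_M>0\). Since \(U\) has compact support in \(B\), \eqref{eq:compact-Morrey-form} gives
\[
\int_{\{t\ge\eta_M\}}|U|^2e^{-2n\varphi}\,dV\le \frac{1}{2n\lambda_M}\,Q_{n,\varphi}(U),
\]
and this is small relative to \(Q_{n,\varphi}(U)\) once \(n\ge n_A\).

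\textbf{Step 2 (radial identity in the layer).} Fix a cutoff \(\chi(t)=\chi_0(t/\eta_M)\) equal to \(1\) on \([0,\eta_M]\) and supported in \([0,2\eta_M)\), so that \(|t\chi'(t)|\le C\) uniformly in \(\eta_M\). For each component \(f=U_a\), I use \(\partial_w(w)=1\) and integrate by parts in \(w\), with no boundary terms by compact support:
\[
\int \chi|f|^2e^{-2n\varphi}=-\int w\,\partial_w\!\bigl(\chi e^{-2n\varphi}\bigr)|f|^2-\int \chi\, w\,\partial_w|f|^2e^{-2n\varphi}.
\]
Radiality gives \(w\varphi_w=t\varphi_t\) and \(w\partial_w\chi=t\chi'\). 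The first term therefore equals \(2n\int\chi t\varphi_t|f|^2e^{-2n\varphi}\) plus an \(O(1)\) multiple of \(\int_{\{\eta_M<t<2\eta_M\}}|f|^2e^{-2n\varphi}\), which is handled by Step 1.

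The term with \(\partial_w|f|^2=f_w\bar f+f\overline{f_{\bar w}}\) contains the uncontrolled derivative \(f_w\). To remove it, I take real parts and add the analogous identity obtained from \(\partial_{\bar w}(\bar w)=1\). The total derivative \(w\partial_w+\bar w\partial_{\bar w}=2t\partial_t\) then acts on \(|f|^2\) and can be integrated by parts in \(t\) against \(\chi e^{-2n\varphi}\). What survives are the terms \(2n\int\chi t\varphi_t|f|^2e^{-2n\varphi}\), the cutoff error, and cross terms of the form \(\int\chi\,\bar w f\,\overline{\partial_{\bar w}f}\,e^{-2n\varphi}\). By Cauchy--Schwarz and \(|w|^2=t<2\eta_M\), each cross term is at most \(\sqrt{2\eta_M}\,\|f\|_{2n\varphi}\|\partial_{\bar w}f\|_{2n\varphi}\). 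The derivative factor is bounded by \(Q_{n,\varphi}(U)^{1/2}\) via the first sum in \eqref{eq:compact-Morrey-form}. If the real parts do not cancel the \(f_w\) terms as neatly as hoped, the fallback is to write \(\partial_w-2n\varphi_w=e^{2n\varphi}\partial_we^{-2n\varphi}\) and use the standard commutator identity \(\|e^{2n\varphi}\partial_w(e^{-2n\varphi}f)\|^2=\|f_{\bar w}\|^2+2n\int\varphi_{w\bar w}|f|^2\). In that case the extra curvature term is again estimated through the hypothesis, which is where the condition that \(M\eta_M\) be small is used.

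\textbf{Step 3 (using the hypothesis).} On the layer, \eqref{eq:compact-radial-hypothesis} and \eqref{eq:compact-Morrey-form} give
\[
2n\int\chi\,t\varphi_t|U|^2e^{-2n\varphi}\le \frac{1}{cM}\,Q_{n,\varphi}(U).
\]
Combining Steps 1--3 yields
\[
\|U\|^2_{2n\varphi}\le\Bigl(\frac{C}{M}+\frac{C}{n\lambda_M}\Bigr)Q_{n,\varphi}(U)+C\sqrt{\eta_M}\,\|U\|_{2n\varphi}\,Q_{n,\varphi}(U)^{1/2}.
\]
Absorbing the last term by \(ab\le\delta a^2+b^2/(4\delta)\) gives \(\|U\|^2_{2n\varphi}\le\bigl(C/M+C\eta_M+C/(n\lambda_M)\bigr)Q_{n,\varphi}(U)\). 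Choosing \(M\gtrsim A\), then \(\eta_M\lesssim1/A\) (allowed since \(M\eta_M\) may be taken small), and finally \(n_A\) with \(n_A\lambda_M\gtrsim A\) proves the lemma.

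The main obstacle is Step 2: eliminating the holomorphic derivative \(\partial_wf\), which \(Q_{n,\varphi}\) does not control, while keeping the constant in front of \(n\int t\varphi_t|f|^2\) favourable and ensuring the cutoff errors do not depend badly on \(\eta_M\).
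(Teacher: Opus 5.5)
Your Step 2 fails, and it is the core of the lemma. Since $\int\chi|f|^2e^{-2n\varphi}$ is real, the identity obtained from $\partial_{\bar w}(\bar w)=1$ is just the complex conjugate of the one obtained from $\partial_w(w)=1$. Adding them therefore doubles the terms containing $\partial_w f$ rather than cancelling them. Integrating $(w\partial_w+\bar w\partial_{\bar w})|f|^2=2t\partial_t|f|^2$ by parts against $\chi e^{-2n\varphi}$ only returns the identity you started from.

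What your computation actually gives becomes visible once you expand $f=\sum_k f_k$ in angular modes $f_k=e^{ik\theta}g_k(z,|w|)$, with $w=|w|e^{i\theta}$. On the $k$-th mode $w\partial_w f_k=\bar w\partial_{\bar w}f_k+kf_k$, so
\[
\sum_{k\in\mathbb Z}(1+k)\int\chi|f_k|^2e^{-2n\varphi}
=2n\int\chi\,t\varphi_t|f|^2e^{-2n\varphi}-\int t\chi'|f|^2e^{-2n\varphi}
-2\operatorname{Re}\int\chi\,\bar w\,\partial_{\bar w}f\,\overline{f}\,e^{-2n\varphi}.
\]
This controls the modes $k\ge0$; it is exactly the paper's computation for $f=w^kh(t)$. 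It gives nothing for $k\le-1$, where $1+k\le0$.

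Your fallback does not repair this. The commutator identity introduces $2n\int\varphi_{w\bar w}|f|^2e^{-2n\varphi}$, which you would need to bound from above, but the hypothesis is only a lower bound on the Hessian. For $f=U_1$, the Levi term in $Q_{n,\varphi}$ supplies only $\varphi_{z\bar z}|U_1|^2$, and nothing bounds $\varphi_{w\bar w}$ by $\varphi_{z\bar z}$. In the paper's example, $\varphi_{w\bar w}/\varphi_{z\bar z}\to\infty$ as $w\to0$ for fixed $|z|<R_0$.

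The missing idea is to treat the two kinds of modes separately, as the paper does. Because the weight and $t\varphi_t$ are radial in $w$, and $\partial_{\bar w}$ sends mode $k$ to mode $k+1$, the quantities $\|f\|^2_{2n\varphi}$, $\int t\varphi_t|f|^2e^{-2n\varphi}$ and $\|\partial_{\bar w}f\|^2_{2n\varphi}$ all split orthogonally over modes. It therefore suffices to prove, mode by mode for scalar $f$ supported in $\{t<T\}$ with $T=2\eta_M$, that
\[
\|f\|^2_{2n\varphi}\le 4n\int t\varphi_t|f|^2e^{-2n\varphi}+4T\|\partial_{\bar w}f\|^2_{2n\varphi}.
\]
For $f=w^kh(t)$ with $k\ge0$, your identity gives this. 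For $f=\bar w^mh(t)$ with $m\ge1$, you need a Hardy-type inequality:
\begin{itemize}
\item From $t^mh(t)=\int_0^t s^{m-1}(mh+sh')\,ds$ and Cauchy--Schwarz, $\int_0^T t^m|h|^2e^{-2n\varphi}\,dt\le \frac{T}{m}\int_0^T t^{m-1}|mh+th'|^2e^{-2n\varphi}\,dt$, that is, $\|f\|^2_{2n\varphi}\le T\|\partial_{\bar w}f\|^2_{2n\varphi}$.
\item The exchange of integrals in that step requires $t\mapsto e^{-2n\varphi(z,t)}$ to be nonincreasing. This follows from $(t\varphi_t)_t=\varphi_{w\bar w}\ge0$ together with $t\varphi_t\to0$ as $t\to0$.
\end{itemize}
Applying the displayed inequality to $\chi U_a$ gives the inequality you claim in Step 3. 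Your Steps 1 and 3 and your order of choices ($M$, then $\eta_M$, then $n$) then go through. Your cutoff with $|t\chi'|\le C$ is a fine alternative to the paper's partition $\chi_0^2+\chi_1^2=1$ with its $C/\eta$ localization error.
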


\begin{proof}
By density lemma (cf. Proposition 2.3 on page 15 in \cite{S10}), 
it suffices to consider smooth forms compactly supported in \(P\).
Fix an open set \(P_0\) such that $ \overline{P'}\subset P_0\Subset P$ with 
$ \delta=\operatorname{dist} \bigl(\overline{P_0},\mathbb C^2\setminus P\bigr)>0.$
We first consider a smooth $(0, 1)$-form supported in $P_0$. 
Fix \(A>0\), and choose $ M\geq \max\left\{1,\frac{4A}{c}\right\}.$
Choose \(\eta=:\eta_M\) small enough so that 
$ 4cM\eta\leq1,  8\eta<\delta^2$ and let  \(T=2\eta\).
%
Set
\[
 Z_\eta:=
 \left\{z:\text{there exists }w_0
 \text{ with }(z,w_0)\in\overline{P_0},
 \ |w_0|^2\leq T\right\}.
\]
For \(z\in Z_\eta\) and \(|w|^2\leq T\),
$ |(z,w)-(z,w_0)| \leq |w|+|w_0| \leq2\sqrt T =\sqrt{8\eta}<\delta.$
Consequently,
$ Z_\eta\times\{w:|w|^2\leq T\}\subset P.$
In particular, rotating a form supported in
\(P_0\cap\{t<T\}\) does not move its support outside \(P\).
Fix \(z\in Z_\eta\). 
Since
$ \frac{d}{dt}\bigl(t\varphi_t(z,t)\bigr) =\varphi_t(z,t)+t\varphi_{tt}(z,t) =\varphi_{w\bar w}(z,w)\geq 0$
and $ \lim_{t\to 0^+}t\varphi_t(z,t)=0,$
we have $ t\varphi_t(z,t)  \geq0$ and thus
$ \frac{d }{d t}e^{-2n\varphi(z,t)}=-2n\varphi_t(z,t)e^{-2n\varphi(z,t)} \leq0.$

Consider first
$ f(w)=w^kh(t)$ for $ k\geq0$ with $h(t)=0$ near $T$.
Since the boundary terms vanish, and it follows from the fundamental theorem in calculus that 
\begin{align*}
 0
 &=
 \left[t^{k+1}|h|^2 e^{-2n\varphi(z,t)}\right]_0^T \notag\\
 &=(k+1) \int_0^Tt^k|h|^2 e^{-2n\varphi(z,t)}\,dt
   +2\operatorname{Re}\int_0^T
          t^{k+1}h'\overline h e^{-2n\varphi(z,t)}\,dt- 2n\int_0^T(t\varphi_t)t^k|h|^2 e^{-2n\varphi(z,t)}\,dt.
 \label{eq:compact-radial-ibp}
\end{align*}
By Cauchy--Schwarz inequality,
\begin{align*}
 \left|\int_0^Tt^{k+1}h'\overline h e^{-2n\varphi(z,t)}\,dt\right|
 &\leq
 \left(\int_0^Tt^{k+1}|h'|^2 e^{-2n\varphi(z,t)}\,dt\right)^{1/2}
 \left(\int_0^Tt^{k+1}|h|^2 e^{-2n\varphi(z,t)}\,dt\right)^{1/2}\\
 &\leq \sqrt{T} \left(\int_0^Tt^{k+1}|h'|^2 e^{-2n\varphi(z,t)}\,dt \right)^{\frac12} \left( \int_0^Tt^k|h|^2 e^{-2n\varphi(z,t)}\,dt \right)^{\frac12} .
\end{align*}
Therefore,
\begin{align*}
 (k+1) \int_0^Tt^k|h|^2 e^{-2n\varphi(z,t)}\,dt
 &\leq2\sqrt{T} \left(\int_0^Tt^{k+1}|h'|^2 e^{-2n\varphi(z,t)}\,dt \right)^{\frac12} \left( \int_0^Tt^k|h|^2 e^{-2n\varphi(z,t)}\,dt \right)^{\frac12}  \\
 &~~~+ 2n\int_0^T(t\varphi_t)t^k|h|^2 e^{-2n\varphi(z,t)}\,dt \\
 &\leq\frac{k+1}{2} \int_0^Tt^k|h|^2 e^{-2n\varphi(z,t)}\,dt +\frac{2T}{k+1}\int_0^Tt^{k+1}|h'|^2 e^{-2n\varphi(z,t)}\,dt \\
 &~~~+2n\int_0^T(t\varphi_t)t^k|h|^2 e^{-2n\varphi(z,t)}\,dt.
\end{align*}
Using \(2cMT=4cM\eta\leq1\),
we have 
\begin{align*}
\frac{cM}{2} \int_0^Tt^k|h|^2 e^{-2n\varphi(z,t)}\,dt 
& \leq \frac{cM(k+1)}2 \int_0^Tt^k|h|^2 e^{-2n\varphi(z,t)}\,dt \\
 &\leq\frac{2cMT}{k+1}\int_0^Tt^{k+1}|h'|^2 e^{-2n\varphi(z,t)}\,dt+cM 2n\int_0^T(t\varphi_t)t^k|h|^2 e^{-2n\varphi(z,t)}\,dt \\
 &\leq \int_0^Tt^{k+1}|h'|^2 e^{-2n\varphi(z,t)}\,dt+cM 2n\int_0^T(t\varphi_t)t^k|h|^2 e^{-2n\varphi(z,t)}\,dt .
\end{align*}

Now consider $ f(w)=\bar w^{\,m}h(t)$ for $m\geq1$.
Then $ \frac{\partial f}{\partial\bar w} =\bar w^{\,m-1}(mh+th').$
It follows from $ t^mh(t) =\int_0^t s^{m-1}\bigl(mh(s)+sh'(s)\bigr)\,ds$ 
and Cauchy--Schwarz inequality that 
\begin{align*}
 t^{2m}|h(t)|^2
 &\leq
 \left(\int_0^ts^{m-1}\,ds\right)
 \left(\int_0^ts^{m-1}|mh+sh'|^2\,ds\right)=\frac{t^m}{m}
   \int_0^ts^{m-1}|mh+sh'|^2\,ds.
\end{align*}
Then, by the monotonicity of $e^{-2n\varphi(z,t)}$ in $t$,  we obtain
\begin{align*}
 &~~~ \int_0^Tt^m|h|^2 e^{-2n\varphi(z,t)}\,dt \\
 &\leq\frac1m
 \int_0^T e^{-2n\varphi(z,t)}\,
       \int_0^ts^{m-1}|mh+sh'|^2\,ds\,dt \notag\\
 &=\frac1m
 \int_0^T s^{m-1}|mh+sh'|^2
        \left( \int_s^T e^{-2n\varphi(z,t)}dt \right)\,ds \notag\\
 &\leq\frac1m
 \int_0^Ts^{m-1}|mh+sh'|^2(T-s) e^{-2n\varphi(z,s)} \,ds \notag\\
 &\leq\frac{T}{m} \int_0^Tt^{m-1}|mh+th'|^2 e^{-2n\varphi(z,t)}\,dt .
\end{align*}
Thus
\begin{align*}
&~~~ \int_0^Tt^{m-1}|mh+th'|^2 e^{-2n\varphi(z,t)}\,dt +cM 2n\int_0^T(t\varphi_t)t^m|h|^2 e^{-2n\varphi(z,t)}\,dt  \\
 & \geq \int_0^Tt^{m-1}|mh+th'|^2 e^{-2n\varphi(z,t)}\,dt  \geq\frac{m}{T} \int_0^Tt^m|h|^2 e^{-2n\varphi(z,t)}\,dt \\
& =\frac{m}{2\eta} \int_0^Tt^m|h|^2 e^{-2n\varphi(z,t)}\,dt  \geq\frac{cM}{2} \int_0^Tt^m|h|^2 e^{-2n\varphi(z,t)}\,dt.
\end{align*}
%
In both cases, using polar coordinates, we get 
\begin{align*}
 &\int_{|w|^2<T}
 \left(
 \left|\frac{\partial f}{\partial\bar w}\right|^2
 +2ncM(t\varphi_t)|f|^2
 \right)e^{-2n\varphi(z,t)}\,dA(w) 
 \geq
 \frac{cM}{2}
 \int_{|w|^2<T}|f|^2 e^{-2n\varphi(z,t)}\,dA(w).
\end{align*}
Integrating in \(z\), we obtain
\begin{equation}\label{alice}
 \left\|\frac{\partial f}{\partial\bar w}
       \right\|_{2n\varphi}^2
 +2ncM\int_B(t\varphi_t)|f|^2e^{-2n\varphi}\,dV
 \geq\frac{cM}{2}\|f\|_{2n\varphi}^2.
\end{equation}

Let $U$ be a smooth $(0, 1)$-form supported in  $P_0\cap\{t<T\}$. 
Write \(w=re^{i\theta}\) and expand the coefficients in Fourier series
$U_1=\sum_{k\in\mathbb Z}a_k(z,r)e^{ik\theta}, U_2=\sum_{k\in\mathbb Z}b_k(z,r)e^{ik\theta}.$
Write $U=\sum_{\ell\in\mathbb Z}U^{(\ell)}$, where $U^{(\ell)} = a_\ell e^{i\ell\theta}\,d\bar z + b_{\ell+1}e^{i(\ell+1)\theta}\,d\bar w$ has angular character \(\ell\), namely, $R_\alpha^*U^{(\ell)}=e^{i\ell\alpha}U^{(\ell)}$
 under the rotation $R_\alpha(z,w)=(z,e^{i\alpha}w)$. 
Since \(B\) and \(e^{-2n\varphi}\) are independent of \(\theta\), it follows that
\[ \|U\|_{2n\varphi}^{2} =\sum_{\ell\in\mathbb Z}\|U^{(\ell)}\|_{2n\varphi}^{2}.\]
Also, since $\bar\partial$ and $\bar\partial_{2n\varphi}^{*}$ are independent of $\theta$, 
$$Q_{n,\varphi}(U)=\sum_{\ell\in\mathbb Z}Q_{n,\varphi}(U^{(\ell)}).$$
It follows from (\ref{eq:compact-Morrey-form}), (\ref{eq:compact-radial-hypothesis}), (\ref{alice}) that 
\begin{align*}
 Q_{n,\varphi}(U^{(\ell)})
 &\geq
 \sum_{a=1}^2
 \left[
 \left\|\frac{\partial U_a^{(\ell)}}{\partial\bar w}
       \right\|_{2n\varphi}^2
 +2ncM\int_B(t\varphi_t)|U_a^{(\ell)}|^2
                  e^{-2n\varphi}\,dV
 \right] \geq\frac{cM}{2}
       \sum_{a=1}^2\|U_a^{(\ell)}\|_{2n\varphi}^2,
\end{align*}
yielding
\[
 Q_{n,\varphi}(U)
 \geq\frac{cM}{2}\|U\|_{2n\varphi}^2
 \geq2A\|U\|_{2n\varphi}^2.
\]

For a general smooth $(0, 1)$-form $U$ supported in $P'$, 
choose cutoff functions $\chi_0, \chi_1$ invariant under rotations in $w$ satisfying
$ \chi_0^2+\chi_1^2=1$ with
$ \operatorname{supp}(\chi_0U)\subset\{t\leq \frac{3\eta}2\}$
and $ \operatorname{supp}(\chi_1U)\subset\{t\geq\eta\}.$
Moreover,
$ \sum_{j=0}^1|\bar\partial\chi_j|^2 \leq\frac{C  }{\eta}
       \mathbf1_{\{\eta\leq t\leq2\eta\}}$
for some $ C>0 $.
Choose $\lambda>0$ such that $\lambda$ is less than the infimum of the smallest eigenvalue of $\left(\partial\bar\partial \varphi \right)_{2 \times 2}$ on $P_0 \cap \{|w|^2\geq\eta\}$.
%
 Then (\ref{eq:compact-Morrey-form}) implies 
\[
 Q_{n,\varphi}(\chi_1U)
 \geq2n\lambda\|\chi_1U\|_{2n\varphi}^2.
\]
It follows from
$ \sum_{j=0}^1\chi_j^2=1$ and  $ \sum_{j=0}^1\chi_j\bar\partial\chi_j=0$
that 
\begin{align*}
 \sum_{j=0}^1\sum_{a,b=1}^2
 \left\|
 \frac{\partial(\chi_jU_a)}{\partial\bar z_b}
 \right\|_{2n\varphi}^2
 =
 \sum_{a,b=1}^2
 \left\|\frac{\partial U_a}{\partial\bar z_b}
 \right\|_{2n\varphi}^2 +
 \int_B\sum_{j=0}^1|\bar\partial\chi_j|^2
            |U|^2e^{-2n\varphi}\,dV.
\end{align*}
and
\[ \sum_{j=0}^1 \sum_{a,b=1}^2 \varphi_{z_a\bar z_b} (\chi_jU_a)\overline{\chi_jU_b} = \sum_{a,b=1}^2 \varphi_{z_a\bar z_b}U_a\overline{U_b}.\]
Consequently,
\[
 Q_{n,\varphi}(\chi_0U)+Q_{n,\varphi}(\chi_1U)
 =Q_{n,\varphi}(U)+  \int_B\sum_{j=0}^1|\bar\partial\chi_j|^2
                   |U|^2e^{-2n\varphi}\,dV
\]
where
\begin{align*}
 0\leq \int_B\sum_{j=0}^1|\bar\partial\chi_j|^2
                   |U|^2e^{-2n\varphi}\,dV \leq\frac{C}{\eta}
 \int_{\{\eta\leq t\leq2\eta\}}
             |U|^2e^{-2n\varphi}\,dV.
\end{align*}
By \eqref{eq:compact-Morrey-form},
\[
 Q_{n,\varphi}(U)
 \geq2n\lambda
 \int_{\{\eta\leq t\leq2\eta\}}
             |U|^2e^{-2n\varphi}\,dV.
\]
Hence
\[
 \int_B\sum_{j=0}^1|\bar\partial\chi_j|^2
                   |U|^2e^{-2n\varphi}\,dV 
 \leq\frac{C}{2n\lambda\eta}
       Q_{n,\varphi}(U).
\]
By choosing $n_A$ sufficiently large, 
 we have 
$$ 
 \int_B\sum_{j=0}^1|\bar\partial\chi_j|^2      |U|^2e^{-2n\varphi}\,dV \leq Q_{n,\varphi}(U)$$
and 
$$ Q_{n,\varphi}(\chi_1U) \geq2A\|\chi_1U\|_{2n\varphi}^2, $$
for \(n\geq n_A\). 
Therefore,  we have 
\begin{align*}
 2Q_{n,\varphi}(U)
 &\geq Q_{n,\varphi}(U)+ \int_B\sum_{j=0}^1|\bar\partial\chi_j|^2
                   |U|^2e^{-2n\varphi}\,dV\\
 &=Q_{n,\varphi}(\chi_0U)
   +Q_{n,\varphi}(\chi_1U)\\
 &\geq2A\left(
       \|\chi_0U\|_{2n\varphi}^2
       +\|\chi_1U\|_{2n\varphi}^2\right)\\
 &=2A\|U\|_{2n\varphi}^2.
\end{align*}
\end{proof}

By \eqref{eq:compact-J-limits} and \eqref{eq:compact-varphi-ratio}, (\ref{eq:compact-radial-hypothesis}) holds. 
For any given \(M>0\), choose
\(\eta_M\) so small that \(J(t,z)\geq M\) for \(t<2\eta_M\), as well as \(M\eta_M\) being sufficiently small.
Define
\begin{equation}\label{eq:compact-gamma}
 \gamma_{n}(P')
 :=\inf\left\{Q_{n,\varphi}(U):
 \|U\|_{2n\varphi}=1,\ 
 U\in\operatorname{Dom}Q_{n,\varphi},\
 \operatorname{supp}U\subset P'\right\}.
\end{equation}
Lemma~\ref{lem:compact-radial} yields
\begin{equation}\label{eq:compact-gaps}
 \gamma_{n}(P')\rightarrow\infty
\end{equation}
as $n\to+\infty$.

\begin{proposition}\label{thm:compact-G1}
The complex Green operator \(G_1\) on \((0,1)\)-forms on \(M=b\Omega\)
is compact.
\end{proposition}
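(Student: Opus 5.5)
We plan to establish compactness of $G_1$ through the compactness estimate on $M$: for every $\varepsilon>0$ there is $C_\varepsilon$ with
\[
 \|\alpha\|_M^2\le\varepsilon\bigl(\|\bar\partial_b\alpha\|_M^2+\|\bar\partial_b^*\alpha\|_M^2\bigr)+C_\varepsilon\|\alpha\|_{-1,M}^2
\]
for all $(0,1)$-forms $\alpha$ in the relevant domain on $M$. Since $M$ is strictly pseudoconvex off the compact set $K$, a partition of unity $\chi_K^2+\chi^2=1$ reduces matters. On the support of $\chi$, subelliptic estimates hold, so $\chi\alpha$ satisfies the estimate. The commutator terms $|d\chi|^2|\alpha|^2$ are absorbed into $C_\varepsilon\|\alpha\|_{-1}^2$ plus a small multiple of the energy, using the standard interpolation argument. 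It therefore suffices to prove the estimate for forms supported in a small neighborhood $V$ of $K$ in $M$. We take $V$ to be the part of $M$ lying over $P'\times\{|u|>1/2\}$, where $P'\Subset P\subset U$ is a rotation-invariant neighborhood of $K_0$.

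The next step is to Fourier-decompose in the $u$-variable. Near $K$ we have $u\neq0$, so $M$ is the graph $\{|u|=e^{-\varphi(z,w)}\}$ over $B$, and $M$ is parametrized by $(z,w,\theta)$ with $u=e^{-\varphi}e^{i\theta}$. We expand $\alpha=\sum_{n\in\mathbb Z}\alpha_n e^{in\theta}$. Following the Hartogs-domain computation of Fu–Straube and Christ–Fu, the tangential Cauchy–Riemann operator on the $n$-th mode is unitarily equivalent to $\bar\partial$ on $(0,1)$-forms on the base with weight $e^{-2n\varphi}$, and $\bar\partial_b^*$ corresponds to $\bar\partial^*_{2n\varphi}$. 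The equivalence holds up to the harmless Jacobian factor of the parametrization, which is bounded above and below. Hence
\[
 \|\bar\partial_b\alpha\|^2+\|\bar\partial_b^*\alpha\|^2\asymp\sum_n Q_{n,\varphi}(U_n),\qquad \|\alpha\|^2\asymp\sum_n\|U_n\|_{2n\varphi}^2 .
\]
Here $U_n$ is the base form attached to $\alpha_n$, supported in $P'$. We expect this identification, including the precise weight and sign conventions, to be the main technical obstacle. We would verify it by writing $\bar L=\partial_{\bar z_a}-(\rho_{\bar z_a}/\rho_{\bar u})\partial_{\bar u}$ in the coordinates $(z,w,\theta)$ and checking that $\partial_\theta$ acts as multiplication by $in$ up to weight shifts.

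Given the identification, (\ref{eq:compact-gaps}) supplies the key inequality $Q_{n,\varphi}(U_n)\ge\gamma_n(P')\|U_n\|^2_{2n\varphi}$ with $\gamma_n(P')\to\infty$ as $n\to+\infty$. The modes with $n\to-\infty$ must be handled separately, since there the weight $e^{-2n\varphi}$ has the opposite sign. For these we plan to use the analogous estimate for the top-degree/adjoint side, or equivalently to exploit conjugation symmetry: $\bar\partial_b$ on $(0,1)$-forms in CR dimension $2$ with negative modes is dual, via the Hodge-$*$ type conjugation, to $(0,1)$-forms with positive modes. If needed, we would instead rerun Lemma~\ref{lem:compact-radial} with the roles of the Morrey terms switched. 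Given $\varepsilon$, we choose $N$ with $\gamma_{|n|}\ge 1/\varepsilon$ for $|n|>N$. The finitely many modes with $|n|\le N$ are controlled because $\alpha\mapsto\alpha_n$ followed by the tangential derivatives in $z,w$ gives a full tangential gradient bound from the Morrey identity (\ref{eq:compact-Morrey-form}). The $\theta$-derivative is bounded by $N$, so these modes lie in a bounded subset of $W^1(M)$, and Rellich's lemma gives the $C_\varepsilon\|\alpha\|_{-1}^2$ term. Summing the modes yields the compactness estimate, and hence $G_1$ is compact.
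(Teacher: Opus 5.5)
Your argument has the same core as the paper's. You decompose in $\theta$ and identify the $n$-th mode with $Q_{n,\varphi}$; this is the paper's map $\mathcal U_nf=e^{in\theta}e^{-n\varphi}(f_1\overline\omega_1+f_2\overline\omega_2)$, with $\widetilde Q_b(\mathcal U_nf)=Q_{n,\varphi}(f)$. You then use the gaps $\gamma_n(P')\to\infty$. For $n\le -1$ you use a conjugate-linear Hodge-type map, which is exactly the paper's $\mathcal J(v_1\overline\omega_1+v_2\overline\omega_2)=\overline{v_2}\,\overline\omega_1-\overline{v_1}\,\overline\omega_2$; it works because $(0,1)$ is the middle degree in CR dimension $2$. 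For $|n|\le N$ you use an $H^1$ bound plus interpolation. There you also need $\|\Pi_n\alpha\|_{-1}\le C\|\alpha\|_{-1}$, which holds because the rotations form a compact family of diffeomorphisms.

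The difference is in the localization. You first use that compactness estimates localize: the commutator term enters multiplied by $\varepsilon$ and is absorbed into the left-hand side, not into $C_\varepsilon\|\alpha\|_{-1}^2$. You then dispose of the strictly pseudoconvex part by subellipticity and interpolation, and Fourier-decompose only forms supported near $K$. The paper instead decomposes globally and localizes each mode via $\widetilde Q_b(\chi_1v)+\widetilde Q_b(\chi_2v)=\widetilde Q_b(v)+\int_M\sum_j|\overline\partial_b\chi_j|^2|v|^2\,d\mu$. That forces it to extract growth in $|n|$ on $\operatorname{supp}\chi_2$ from the rotation generator $\mathcal X$ and the $\tfrac12$-subelliptic estimate. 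Your organization is more economical; the paper's gives a lower bound $\widetilde Q_b(v_n)\ge\lambda_n\|v_n\|^2$ for every mode on all of $M$.

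There is one step that does not hold as you state it. You claim the identification is unitary up to a harmless Jacobian factor, so that $\|\overline\partial_b\alpha\|^2+\|\overline\partial_b^*\alpha\|^2\asymp\sum_nQ_{n,\varphi}(U_n)$. A change of volume density does only perturb $\overline\partial_b^*$ by a zeroth-order term, which is harmless. The problem is the fibre metric. In the induced Euclidean metric it is not scalar in the frame $\overline\omega_1,\overline\omega_2$: the Gram matrix of $\overline L_1,\overline L_2$ is a constant multiple of $I+\xi\xi^*$ with $\xi_a=\Psi_{\bar z_a}/u$. This vanishes on $K$ but on no neighborhood of $K$, since $\Psi_{\bar w}\neq0$ for $w\neq0$. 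Suppose $(\cdot,\cdot)_E=(T_1\cdot,\cdot)_{\mathrm p}$ on $(0,1)$-forms with $T_1$ a non-scalar Hermitian matrix function, and $(\cdot,\cdot)_E=(T_0\cdot,\cdot)_{\mathrm p}$ on functions. Then $\overline\partial_b^{*,E}v=T_0^{-1}\overline\partial_b^{*,\mathrm p}(T_1v)$. Its principal part is $-T_0^{-1}\sum_{a,b}(T_1)_{ab}L_av_b$, not a multiple of $-(L_1v_1+L_2v_2)$. The extra first-order terms $L_av_b$ are not controlled by $\widetilde Q_b(v)+\|v\|^2$ in general. The same issue breaks the exact identity $\widetilde Q_b(\mathcal Jv)=\widetilde Q_b(v)$.

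The paper avoids this as follows. It puts on $M$ a rotation-invariant metric $h$ and density $d\mu$ that agree near $K$ with the product ones ($\overline\omega_a$ orthonormal, $\mathcal F^*d\mu=dV_{z,w}\,d\theta/2\pi$), so both identities hold exactly. It proves the compactness estimate for $(h,d\mu)$. It then invokes that compactness of $G_1$ is independent of the smooth metric, via the characterization by compactness of the canonical $\overline\partial_b$ solution operators in degrees $1$ and $2$ (Raich--Straube, \c{C}elik--Straube). You need to add this last step. Alternatively, carry out a Morrey--Kohn--H\"ormander computation directly in the Euclidean metric with all error terms tracked uniformly in $n$. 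As written, your argument proves compactness only for the Green operator of the adapted metric.
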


\begin{proof}
Let $ M^\circ:=M\cap\{u\neq0\}$ and $\pi(z,w,u)=(z,w)$ be a holomorphic projection from $\Omega$ to the base $B$.
Let  a smooth diffeomorphism $ \mathcal F: B\times\mathbb S^1\rightarrow M^\circ$ be given by
$ \mathcal F(z,w,\theta) =(z,w,e^{-\varphi(z,w)}e^{i\theta}).$
Choose open sets $ K_0\subset V_0\Subset V_1\Subset V_2\Subset P'$. Let $\psi_0: \mathbb{C} \to [0,1]$ be a 
 smooth function  with compact support in $V_1$ and $\kappa_0: \mathbb{C}^2 \to [0,1]$ be a smooth  function  with compact support in $V_2$
such that $ \psi_0=1$ near $\overline{V_0}$ and $ \kappa_0=1$  near $\overline{V_1}.$
Extend these functions by zero to the base $B$ and set
$ \psi=\psi_0\circ\pi,  \kappa=\kappa_0\circ\pi.$ 
Let  $ c_0:=\min_{\overline{V_2}}e^{-\varphi}>0$. Then
$ \mathcal F(\overline{V_2}\times\mathbb S^1) \subset M\cap\{|u|\geq c_0\}.$
Write \(z_1=z\), \(z_2=w\). On \(M^\circ\), the CR tangential vector
fields and their conjugates are
\[
 \overline L_a
 =\frac{\partial}{\partial\overline z_a}
   -i\varphi_{\overline z_a}\frac{\partial}{\partial\theta},
 \quad
 L_a
 =\frac{\partial}{\partial z_a}
   +i\varphi_{z_a}\frac{\partial}{\partial\theta},
 \quad a=1,2.
\]
Let \(\overline\omega_a\) be the restriction of
\(d\overline z_a\) to \(T^{0,1}M\). Then
$ \overline\omega_a(\overline L_b)=\delta_{ab},  \overline\partial_b\overline\omega_a=0.$
Define a Hermitian metric \(h_{\mathrm p}\) on the
\((0,1)\)-cotangent bundle over \(M^\circ\) by
$ h_{\mathrm p}(\overline\omega_a,\overline\omega_b)=\delta_{ab}.$
Let \(h_{\mathrm E}\) be the Euclidean induced metric and
let $ h=(1-\kappa)h_{\mathrm E}+\kappa h_{\mathrm p}.$
Then $h$ defines a smooth
positive Hermitian metric on \(M\), and $h$ also induces Hermitian
 metrics for $(0, q)$-forms on $M$. 
Let $dV_{z,w}\,\frac{d\theta}{2\pi}$ be the standard smooth volume form on $B\times\mathbb S^1$ and 
define a smooth volume form on $M^\circ$ by $ \mathcal F^*d\mu_{\mathrm p}
 =dV_{z,w}\,\frac{d\theta}{2\pi}.$
If \(d\sigma\) denotes Euclidean surface volume form, define $ d\mu=(1-\kappa+\kappa r)\,d\sigma$ to be a smooth positive volume form on $M$, where
$r=\frac{d\mu_{\mathrm p}}{d\sigma}$ is a smooth density function on $M^\circ$. 
On a neighborhood of
\(\mathcal F(\overline{V_1}\times\mathbb S^1)\),
$ h=h_{\mathrm p}, d\mu=d\mu_{\mathrm p}.$
Note that both \(h\) and \(d\mu\) are invariant under
$ R_\alpha(z,w,u)=(z,w,e^{i\alpha}u).$

We will use \(|\cdot|, \|\cdot\|, (\cdot, \cdot)\) denotes the norm and inner product induced by
\(h,d\mu\), and \(\overline\partial_b^\dagger\) denotes the corresponding
 adjoint of $\overline\partial_b$. 
$ \widetilde Q_b(v)=\|\overline\partial_bv\|^2  +\|\overline\partial_b^\dagger v\|^2$ for $v \in \operatorname{Dom}\overline\partial_b   \cap\operatorname{Dom}\overline\partial_b^\dagger $.
%
Because $M$ is compact,  \(\|\cdot\|\)  and the Euclidean norm \(\|\cdot\|_{\mathrm E}\) are always comparable. 
Let $\chi_1=\sin\frac{\pi\psi}{2}, \chi_2=\cos\frac{\pi\psi}{2}.$ Then
$\chi_1, \chi_2$ are cut-off functions invariant
under rotations in $u$ satisfying 
$ \chi_1^2+\chi_2^2=1$, $\chi_1=1$ near $K$,
and $ \operatorname{supp}\chi_1 \Subset\mathcal F(V_1\times\mathbb S^1),  \operatorname{supp}\chi_2 \subset M\setminus\mathcal F(V_0\times\mathbb S^1).$
In particular, \(\operatorname{supp}\chi_2\) is a compact
subset of the strictly pseudoconvex part of \(M\).

For a real smooth function \(\chi\), it is easy to verify that 
\[
 \overline\partial_b(\chi v)
 =\chi\overline\partial_bv
   +\overline\partial_b\chi\wedge v,
\quad 
 \overline\partial_b^\dagger(\chi v)
 =\chi\overline\partial_b^\dagger v
   -(\overline\partial_b\chi)\mathbin{\lrcorner}_h v,
\]
where $\mathbin{\lrcorner}_h$ denotes the contraction with respect to the Hermitian metric $h$. By
\[
 \sum_{j=1}^2\chi_j\overline\partial_b\chi_j=0, 
\quad
 |\beta\wedge v|^2+|\beta\mathbin{\lrcorner}_h v|^2
 =|\beta|^2|v|^2,
\]
we obtain 
\begin{equation}\label{111}
 \widetilde Q_b(\chi_1v)+\widetilde Q_b(\chi_2v)
 =\widetilde Q_b(v)+\int_M\sum_{j=1}^2
       |\overline\partial_b\chi_j|^2|v|^2\,d\mu,
\end{equation}
where
\begin{align}\label{222}
\int_M\sum_{j=1}^2
       |\overline\partial_b\chi_j|^2|v|^2\,d\mu=\frac{\pi^2}{4}
   \int_M|\overline\partial_b\psi|^2|v|^2\,d\mu \leq C \|v\|^2,
\end{align}
for  some constant $ C>0.$

For \(n\in\mathbb Z\), define the Fourier series
\[
v_n:= \Pi_nv
 :=\frac1{2\pi}\int_0^{2\pi}
       e^{-in\alpha}R_\alpha^*v\,d\alpha.
\]
Then it is easy to verify that $ R_\alpha^*v_n=e^{in\alpha}v_n.$ Moreover, 
Since the metric \(h\) and the density \(d\mu\) are invariant under
\(R_\alpha\), the pullback \(R_\alpha^*\) is unitary on
\(L^2_{(0,q)}(M,h_q,d\mu)\). More precisely,
$|R_\alpha^*v|(p)=|v|(R_\alpha(p))$ and thus
$\|R_\alpha^*v\|=\|v\|.$
Because \(R_\alpha\) is a CR automorphism,
$\overline\partial_bR_\alpha^* =R_\alpha^*\overline\partial_b.$
By taking adjoints and using the unitarity of \(R_\alpha^*\), we also obtain
$\overline\partial_b^\dagger R_\alpha^*=R_\alpha^*\overline\partial_b^\dagger.$
Moreover, 
if \(n\neq m\), it follows that
$$(v_n,v_m)=(R_\alpha^* v_n, R_\alpha^* v_m)=
(e^{in\alpha}v_n,e^{im\alpha}v_m)=e^{i(n-m)\alpha}(v_n,v_m).$$
This implies $(v_n,v_m)=0$ for $n\neq m$.
Therefore, Parseval’s identity yields
$\|v\|^2 = \sum_{n\in\mathbb Z}\|v_n\|^2.$
Also, because \(\overline\partial_b\) and
\(\overline\partial_b^\dagger\) commute with \(R_\alpha^*\), they also commute with $\Pi_n$:
$$ \overline\partial_bv_n= \overline\partial_b\Pi_nv=\Pi_n\overline\partial_bv,\quad  \overline\partial_b^\dagger v_n=\overline\partial_b^\dagger\Pi_nv=\Pi_n\overline\partial_b^\dagger v.$$
Then each $v_n \in  \operatorname{Dom}\overline\partial_b \cap \operatorname{Dom}\overline\partial_b^\dagger$ if $ v \in \operatorname{Dom}\overline\partial_b \cap \operatorname{Dom}\overline\partial_b^\dagger.$
Furthermore, \(\overline\partial_bv_n\) and $\overline\partial_b^\dagger v_n$ have character \(n\), namely,  $R_\alpha^*(\overline\partial_bv_n)=e^{in\alpha}\overline\partial_bv_n$,
 $R_\alpha^*(\overline\partial_b^\dagger v_n)=e^{in\alpha}\overline\partial_b^\dagger v_n.$ Consequently, the families
$\{\overline\partial_bv_n\}_{n\in\mathbb Z}$ 
and $\{\overline\partial_b^\dagger v_n\}_{n\in\mathbb Z}$ are orthogonal. Applying Parseval’s identity 
we obtain
$$
\|\overline\partial_bv\|^2= \sum_{n\in\mathbb Z} \|\overline\partial_bv_n\|^2, \quad  \|\overline\partial_b^\dagger v\|^2=\sum_{n\in\mathbb Z}\|\overline\partial_b^\dagger v_n\|^2.$$
%
It thus follows that $ \widetilde Q_b(v)= \sum_{n\in\mathbb Z} \left( \|\overline\partial_bv_n\|_{h,\mu}^2 + \|\overline\partial_b^\dagger v_n\|_{h,\mu}^2 \right)
=\sum_{n\in\mathbb Z}\widetilde Q_b(v_n)$ for every $v\in \operatorname{Dom}\overline\partial_b \cap \operatorname{Dom}\overline\partial_b^\dagger.$
Since cut-off functions \(\chi_1,\chi_2\) are invariant under rotations of \(u\), it is easy to verify that  
$ \Pi_n(\chi_jv)=\chi_j\Pi_nv$ for $ j=1,2.$

We now compute the quadratic form in \(\mathcal F(V_2\times\mathbb S^1)\). 
For  scalar functions \(f,g\) supported in \(\mathcal F(V_2\times\mathbb S^1)\), it follows from the integration by parts that
\[
 \int_M(\overline L_af)\overline g\,d\mu
 =-\int_M f\,\overline{L_ag}\,d\mu.
\]
Thus, for
\(v=v_1\overline\omega_1+v_2\overline\omega_2\) supported in \(\mathcal F(V_2\times\mathbb S^1)\), we have 
\[
 \overline\partial_bv
 =(\overline L_1v_2-\overline L_2v_1)
   \overline\omega_1\wedge\overline\omega_2,
 \quad
 \overline\partial_b^\dagger v
 =-L_1v_1-L_2v_2.
\]
Define
$ D_n f=\overline\partial f+n\overline\partial\varphi\wedge f$ to be a first order differential operator on the base. 
It follows that 
 the adjoint  $D_n^*$ of $D_n$
with respect to unweighted Euclidean measure on the base is given by 
$ D_n^*g =-\sum_{a=1}^2  \left(\frac{\partial g_a}{\partial z_a}   -n\varphi_{z_a}g_a\right)$ for $g$ supported in $V_2$. 
Therefore, for $ v_n=e^{in\theta}   \sum_{a=1}^2g_a(z,w)\overline\omega_a$ 
 supported in \(\mathcal F(V_2\times\mathbb S^1)\),  the preceding formulas yield 
\[
 \|v_n\|^2=\|g\|_{L^2(B)}^2,
 \quad
 \widetilde Q_b(v_n)
 =\|D_ng\|_{L^2(B)}^2+\|D_n^*g\|_{L^2(B)}^2.
\]
Conversely,  let $f=f_1\,d\overline z+f_2\,d\overline w$
be a base \((0,1)\)-form supported in \(V_1\). Define the boundary
\((0,1)\)-form \(\mathcal U_nf\) by
\[
\bigl(\mathcal U_nf\bigr)
\bigl(\mathcal F(z,w,\theta)\bigr)
=
e^{in\theta}e^{-n\varphi(z,w)}
\left(
f_1(z,w)\overline\omega_1
+
f_2(z,w)\overline\omega_2
\right).
\]
A direct computation yields
$ \|\mathcal U_nf\|^2 = \int_{V_1} \left(|f_1|^2+|f_2|^2\right)e^{-2n\varphi}\,dV = \|f\|_{2n\varphi}^2.$
Moreover, we have 
$ D_n(e^{-n\varphi}f) =e^{-n\varphi}\overline\partial f$
and
$ D_n^*(e^{-n\varphi}f)= e^{-n\varphi} \left[ -\sum_{a=1}^2 \left( \frac{\partial f_a}{\partial z_a}  -2n\varphi_{z_a}f_a \right) \right]
 =e^{-n\varphi}    \overline\partial_{2n\varphi}^*f.$
It follows that
$  \widetilde Q_b(\mathcal U_nf)=Q_{n,\varphi}(f).$

For \(n\geq1\), write $ \chi_1v_n=\mathcal U_nf_n.$
Since \(\operatorname{supp}f_n\Subset V_1\Subset P'\),
\begin{align*}
 \widetilde Q_b(\chi_1v_n)=Q_{n,\varphi}(f_n)\geq\gamma_n(P')\|f_n\|_{2n\varphi}^2=\gamma_n(P')\|\chi_1v_n\|^2.
\end{align*}
On the other hand, define on \(F(V_2\times\mathbb S^1)\), 
\[
 \mathcal J
 \left(v_1\overline\omega_1+v_2\overline\omega_2\right)
 :=
 \overline{v_2}\,\overline\omega_1
 -\overline{v_1}\,\overline\omega_2.
\]
Then by direct calculation, 
$ \|\mathcal Jv\|=\|v\|,  \operatorname{supp}\mathcal Jv=\operatorname{supp}v, R_\alpha^*(\mathcal Jv_n)=e^{-in\alpha}\mathcal Jv_n.$
Moreover,
\begin{align*}
 \overline\partial_b(\mathcal Jv)= -\bigl(\overline L_1\overline{v_1}  +\overline L_2\overline{v_2}\bigr) \overline\omega_1\wedge\overline\omega_2=\overline{\overline\partial_b^\dagger v}\,
       \overline\omega_1\wedge\overline\omega_2,
\end{align*}
and, if $ \overline\partial_bv =F\,\overline\omega_1\wedge\overline\omega_2,$
then
\[
 \overline\partial_b^\dagger(\mathcal Jv)
 =-L_1\overline{v_2}+L_2\overline{v_1}
 =-\overline F.
\]
Consequently,
$ \widetilde Q_b(\mathcal Jv)=\widetilde Q_b(v).$
For \(n\leq-1\), the form \(\mathcal J(\chi_1v_n)\)
has positive character \(-n\), and hence
\begin{align*}
 \widetilde Q_b(\chi_1v_n)
=\widetilde Q_b\bigl(\mathcal J(\chi_1v_n)\bigr) \geq\gamma_{-n}(P')
       \|\mathcal J(\chi_1v_n)\|^2 =\gamma_{|n|}(P')\|\chi_1v_n\|^2.
\end{align*}
Thus, for every \(n\neq0\),
\begin{equation}\label{chi1}
 \widetilde Q_b(\chi_1v_n)
 \geq\gamma_{|n|}(P')\|\chi_1v_n\|^2.
\end{equation}
Let $\mathcal Xv=\left.\frac{d}{d\alpha}R_\alpha^*v\right|_{\alpha=0}$
be the infinitesimal generator of the rotation in $u$ variable. Since
\(R_\alpha^*\) is unitary, $(R_\alpha^*v,R_\alpha^*g)=(v,g).$
Differentiating at \(\alpha=0\) yields
$(\mathcal Xv,g)+(v,\mathcal Xg)=0,$ and hence $\mathcal X^*=-\mathcal X.$
Moreover, if \(v_n\) is the \(n\)-th Fourier component, then 
$R_\alpha^*v_n=e^{in\alpha}v_n.$
Differentiating this identity at \(\alpha=0\) yields
$\mathcal Xv_n=inv_n.$
As a first-order differential operator, \(\mathcal X\) defines a
bounded map
\[
\mathcal X:H^{1/2}(M)\rightarrow H^{-1/2}(M).
\]
Since \(\chi_2\) is invariant under the \(u\)-rotation,
$\mathcal X(\chi_2v_n)=in\chi_2v_n.$
Hence, by duality,
\begin{align*}
|n|\|\chi_2v_n\|^2 = \left| \bigl(\mathcal X(\chi_2v_n),\chi_2v_n\bigr) \right| \leq \|\mathcal X(\chi_2v_n)\|_{H^{-1/2}} \|\chi_2v_n\|_{H^{1/2}} \leq C\|\chi_2v_n\|_{H^{1/2}}^2. 
\end{align*}
Consequently,
\[
(1+n^2)^{1/2}\|\chi_2v_n\|^2
\leq
C\|\chi_2v_n\|_{H^{1/2}}^2.
\]
On the support of \(\chi_2\), which is strictly pseudoconvex, then the 
subelliptic estimate yields
\[
 \|\chi_2v_n\|_{H^{1/2}(M)}^2
 \leq C'
       \bigl(\widetilde Q_b(\chi_2v_n)+\|\chi_2v_n\|^2\bigr).
\]
It follows that
\begin{equation}\label{chi2}
\widetilde Q_b(\chi_2v_n)
\geq
\left(
c (1+n^2)^{1/2}-1
\right)
\|\chi_2v_n\|^2,
\end{equation}
where \(c>0\) is independent of \(n\).
Combining (\ref{111}), (\ref{222}), (\ref{chi1}), (\ref{chi2}), for \(n\neq0\), we have
\begin{align}\label{33}
 \widetilde Q_b(v_n)
 &\geq
 \gamma_{|n|}(P')\|\chi_1v_n\|^2
 +c(1+n^2)^{1/2}\|\chi_2v_n\|^2 -\|\chi_2v_n\|^2-C\|v_n\|^2 \geq\lambda_n\|v_n\|^2,
\end{align}
where
$ \lambda_n:= \min\left\{\gamma_{|n|}(P'),  c(1+n^2)^{1/2}-1 \right\} -C .$
By (\ref{eq:compact-gaps}), $\lambda_n \to +\infty$,
as $|n|\rightarrow\infty$.

We also need an upper bound for each fixed character.
Write $ \chi_1v_n=e^{in\theta} \sum_{a=1}^2g_a(z,w)\overline\omega_a,$
where each \(g_a\) has compact support in \(V_1\).
For the unweighted operators on the base, it follows from the integration by parts that,
\[
 \|\nabla g\|_{L^2}^2
 =4\sum_{a,b=1}^2
       \left\|\frac{\partial g_a}{\partial\overline z_b}
       \right\|_{L^2}^2
 =4\bigl(\|\overline\partial g\|_{L^2}^2
         +\|\overline\partial^*g\|_{L^2}^2\bigr).
\]
It follows from $ \overline\partial g=D_ng-n\overline\partial\varphi\wedge g$ and $ \overline\partial^*g =D_n^*g-n(\overline\partial\varphi) \mathbin{\lrcorner}g$ that
\begin{align*}
 \|\nabla g\|_{L^2}^2 \leq 8\bigl(\|D_ng\|_{L^2}^2+\|D_n^*g\|_{L^2}^2\bigr) +
 8n^2\int_{V_1}|\overline\partial\varphi|^2|g|^2\,dV \leq
 8\widetilde Q_b(\chi_1v_n)
 +C n^2\|\chi_1v_n\|^2.
\end{align*}
Also, note that  $ \|\partial_\theta(\chi_1v_n)\|^2 =n^2\|\chi_1v_n\|^2.$
It then follows from the
equivalence of the Sobolev norms on the fixed
compact set that 
\[
 \|\chi_1v_n\|_{H^1(M)}^2
 \leq C\left(
       \widetilde Q_b(\chi_1v_n)
       +(1+n^2)\|\chi_1v_n\|^2
       \right).
\]
Again by (\ref{111}) and (\ref{222}), 
$ \widetilde Q_b(\chi_jv_n) \leq\widetilde Q_b(v_n)+C\|v_n\|^2$, 
for \(j=1,2\).
Using $ v_n=\chi_1(\chi_1v_n)+\chi_2(\chi_2v_n),$
it follows from the subelliptic estimates on the strictly pseudoconvex set that  
\begin{align}\label{44}
 \|v_n\|_{H^{1/2}(M)}^2
 &\leq C\left(
       \|\chi_1v_n\|_{H^1(M)}^2
       +\|\chi_2v_n\|_{H^{1/2}(M)}^2
       \right) \leq C\left(
       \widetilde Q_b(v_n)+(1+n^2)\|v_n\|^2
       \right).
\end{align}

Applying the density lemma, all these estimates extend to \(\operatorname{Dom}\overline\partial_b   \cap\operatorname{Dom}\overline\partial_b^\dagger \) . The density lemma states that, for every \(v\in \operatorname{Dom}\overline\partial_b   \cap\operatorname{Dom}\overline\partial_b^\dagger \), there are smooth forms \(v^{(k)}\in C^\infty_{(0,1)}(M) \cap \operatorname{Dom}\overline\partial_b   \cap\operatorname{Dom}\overline\partial_b^\dagger \), such that $\|v^{(k)}-v\|+\|\overline\partial_b(v^{(k)}-v)\|+\|\overline\partial_b^\dagger(v^{(k)}-v)\| \rightarrow 0.$
Thus \(\widetilde Q_b(v^{(k)}-v)\rightarrow0, \widetilde Q_b(v^{(k)})\rightarrow\widetilde Q_b(v).\)
Because $\overline\partial_b\Pi_n= \Pi_n\overline\partial_b, \overline\partial_b^\dagger\Pi_n=\Pi_n\overline\partial_b^\dagger$, 
$ \|\Pi_n(v^{(k)}-v)\|^2 + \|\overline\partial_b\Pi_n(v^{(k)}-v)\|^2 + \|\overline\partial_b^\dagger\Pi_n(v^{(k)}-v)\|^2 \rightarrow0.$ Also
$ \|\chi\Pi_n(v^{(k)}-v)\|^2 + \|\overline\partial_b\left(\chi\Pi_n(v^{(k)}-v)\right)\|^2 + \|\overline\partial_b^\dagger\left(\chi\Pi_n(v^{(k)}-v)\|^2 \right) \rightarrow0$ for a smooth cut-off function $\chi$. 

Set $ S_N:=\sum_{|n|\leq N}\Pi_n,  \Lambda_N:=\inf_{|n|>N}\lambda_n.$
For sufficiently large \(N\),
$ \Lambda_N\rightarrow +\infty$,
and
\begin{align}\label{eq:compact-high-frequency}
 \|(I-S_N)v\|^2 =\sum_{|n|>N}\|v_n\|^2 \leq\frac1{\Lambda_N}  \sum_{|n|>N}\widetilde Q_b(v_n) \leq\frac1{\Lambda_N}\widetilde Q_b(v).
\end{align}
For each fixed \(N\),
\begin{align}\label{eq:compact-low-frequency}
 \|S_Nv\|_{H^{1/2}(M)}^2 \leq(2N+1)\sum_{|n|\leq N}  \|v_n\|_{H^{1/2}(M)}^2 \leq C_N\bigl(\widetilde Q_b(v)+\|v\|^2\bigr).
\end{align}
Moreover, by  Sobolev interpolation inequality that, for every
\(\delta>0\),
\begin{equation}\label{55}
\|f\|^2
\leq
\delta\|f\|_{H^{1/2}(M)}^2
+
C_\delta\|f\|_{H^{-1}(M)}^2.
\end{equation}
Since the rotations \(R_\alpha\) form a compact smooth family of diffeomorphisms of the compact manifold \(M\). Consequently, 
$ \|R_\alpha^*v\|_{H^{-1}(M)} \leq C\|v\|_{H^{-1}(M)}$
holds for 
where \(C>0\) is independent of $0\leq\alpha\leq2\pi$. Therefore,
\[
\begin{aligned}
\|\Pi_nv\|_{H^{-1}(M)} \leq
\frac1{2\pi}
\int_0^{2\pi}
\left|e^{-in\alpha}\right|
\|R_\alpha^*v\|_{H^{-1}(M)}\,d\alpha
\leq
C\|v\|_{H^{-1}(M)}.
\end{aligned}
\]
It follows that 
\begin{align}\label{66}
\|S_Nv\|_{H^{-1}(M)}
\leq
\sum_{|n|\leq N}\|\Pi_nv\|_{H^{-1}(M)}
\leq
(2N+1)C\|v\|_{H^{-1}(M)}.
\end{align}
Applying (\ref{55}) to \(f=S_Nv\) and using \eqref{eq:compact-low-frequency}, (\ref{66}), 
we obtain
\begin{align}
\|S_Nv\|^2
\leq
\delta\|S_Nv\|_{H^{1/2}(M)}^2 +
C_\delta\|S_Nv\|_{H^{-1}(M)}^2  \leq
\delta C_N
\bigl(\widetilde Q_b(v)+\|v\|^2\bigr)
+
C_{\delta,N}\|v\|_{H^{-1}(M)}^2.
\label{eq:compact-low-interpolation}
\end{align}
Then 
\eqref{eq:compact-high-frequency} and
\eqref{eq:compact-low-interpolation} imply
\begin{align}
\|v\|^2 =
\|S_Nv\|^2+\|(I-S_N)v\|^2 \notag \leq
\left(
\delta C_N+\frac{1}{\Lambda_N}
\right)\widetilde Q_b(v)
+
\delta C_N\|v\|^2
+
C_{\delta,N}\|v\|_{H^{-1}(M)}^2.
\label{eq:compact-pre-estimate}
\end{align}
Let \(\varepsilon>0\). First choose \(N\) sufficiently large that
$\frac{1}{\Lambda_N}\leq\frac{\varepsilon}{4}.$
With this \(N\) fixed, choose \(\delta>0\) sufficiently small that
$ \delta C_N \leq \min\left\{\frac12,\frac{\varepsilon}{4}\right\},$
 we obtain
\begin{equation}
\|v\|^2 \leq \varepsilon\widetilde Q_b(v) + C_\varepsilon\|v\|_{H^{-1}(M)}^2,
\end{equation}
for $v \in C^\infty_{(0,1)}(M) \operatorname{Dom}\overline\partial_b \cap \operatorname{Dom}\overline\partial_b^\dagger$. 
By the density lemma, 
 we have shown that the complex Green operator  with respect to $h, d\mu$ is compact.

It follows that  the complex Green operator $G_1$ is also compactness.  
The relevant  argument for compactness independent of the Hermitian metric
is given in Section 2 of \cite{CS09} for the $\bar\partial$-Neumann
operator. The argument for
 the complex Green operator is similar, using
the characterization of compactness of $G_1$ by compactness of
the canonical $\bar\partial_b$ solution operators in degrees
$1$ and $2$ (cf. 
Section 4, p. 773, equation (25) and the following paragraph in \cite{RS08}).
The reason this argument carries over is that $\bar\partial_b$
and its solution spaces are intrinsic to the CR structure.
Changing the inner product changes the canonical solution only
through the orthogonal projection that selects the solution
orthogonal to $\ker\bar\partial_b$. These projections are bounded,
and composition with bounded operators preserves compactness.
Moreover, smooth positive metrics and densities on compact $M$
product equivalent $L^2$ norms, so boundedness and relative compactness
are unchanged. A change of density function is also acceptable by absorbing it into the Hermitian inner product. 
\end{proof}

Now compactness of $N_1$ follows from the result of Raich and Straube (cf. Theorem 1.1 in \cite{RS08}).

\begin{proposition}\label{cor:compact-N1}
The \(\overline\partial\)-Neumann operator \(N_1\) on \(\Omega\) is
compact.
\end{proposition}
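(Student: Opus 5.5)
The plan is to deduce the statement directly from Proposition~\ref{thm:compact-G1} via Theorem~1.1 of \cite{RS08}. That theorem says: if $\Omega\Subset\mathbb C^n$ is a smooth bounded pseudoconvex domain and $1\le q\le n-2$, then compactness of the complex Green operator $G_q$ on $b\Omega$ implies compactness of $N_q$ on $\Omega$. Here $n=3$ and $q=1$, so the degree restriction $q\le n-2=1$ is satisfied.

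The first step is to verify the standing hypotheses of \cite{RS08}. By the proposition following Lemma~\ref{bounded}, $\Omega$ is bounded and pseudoconvex, and $\rho=|u|^2+\Psi-1$ is a smooth defining function with nonvanishing gradient on $b\Omega$. So $b\Omega=M$ is a compact smooth pseudoconvex CR manifold of hypersurface type, which is exactly the setting in which $G_1$ and the $\bar\partial_b$-complex are defined there.

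The second step is to confirm that the operator $G_1$ shown to be compact in Proposition~\ref{thm:compact-G1} is the one used in \cite{RS08}. That is, it should be the Green operator of $\Box_b=\bar\partial_b\bar\partial_b^*+\bar\partial_b^*\bar\partial_b$ on $(0,1)$-forms, with respect to the induced Euclidean metric and surface measure. The proof of Proposition~\ref{thm:compact-G1} works with a modified metric $h$ and density $d\mu$, but its final paragraph transfers compactness to the standard $G_1$. It does this by characterizing compactness of $G_1$ through compactness of the canonical $\bar\partial_b$ solution operators in degrees $1$ and $2$, and these are unchanged up to bounded projections. I will take that conclusion as given. I will also note that, since $1\le q\le n-2$, $\bar\partial_b$ has closed range in the relevant degrees, so $G_1$ is well defined.

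Then Theorem~1.1 of \cite{RS08} applies and yields compactness of $N_1$. No new estimate should be needed. The only real point of care, which I expect to be the main obstacle, is the degree bookkeeping. Raich--Straube require $q\le n-2$ because the top-degree case $q=n-1$ behaves differently. With $n=3$ this forces $q=1$, which is exactly the degree treated in Proposition~\ref{thm:compact-G1}.
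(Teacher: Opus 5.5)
Your proposal is correct and matches the paper, which likewise deduces compactness of $N_1$ in one line from Proposition~\ref{thm:compact-G1} via Theorem~1.1 of \cite{RS08}, with $n=3$ and $q=1$. Your additional checks (smooth bounded pseudoconvexity, the degree range $1\le q\le n-2$, and the transfer to the standard metric in the final paragraph of Proposition~\ref{thm:compact-G1}) are exactly the hypotheses the paper uses implicitly.
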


\noindent Qianyun Wang, wangqy1226@gmail.com, School of Statistics and Mathematics, Shanghai Lixin University of Accounting and Finance, Shanghai 201209, China \\

\noindent Yuan Yuan, yuanyuan@westlake.edu.cn, Institute for Theoretical Sciences, Westlake University,\\
Hangzhou 310024, Zhejiang, China\\
  
\noindent Xu Zhang, xzhangmath@tongji.edu.cn, School of Mathematical Sciences, Tongji University, Shanghai 200092, China.

\end{document}